\newtheorem{theorem}{Theorem}[section]
\newtheorem{lemma}[theorem]{Lemma}
\theoremstyle{definition}
\newtheorem{definition}[theorem]{Definition}
\newtheorem{corollary}[theorem]{Corollary}
\newtheorem{proposition}[theorem]{Proposition}
\theoremstyle{remark}
\numberwithin{equation}{section}
\begin{document}

\def\C{\mathbb C}
\def\R{\mathbb R}
\def\X{\mathbb X}
\def\Z{\mathbb Z}
\def\Y{\mathbb Y}
\def\Z{\mathbb Z}
\def\N{\mathbb N}
\def\cal{\mathcal}

\def\cal{\mathcal}
\def\b{\mathcal B}
\def\c{\mathcal C}
\def\cc{\mathbb C}
\def\x{\mathbb X}
\def\r{\mathbb R}
\def\uu{(U(t,s))_{t\ge s}}
\def\vv{(V(t,s))_{t\ge s}}
\def\xx{(X(t,s))_{t\ge s}}
\def\yy{(Y(t,s))_{t\ge s}}
\def\zz{(Z(t,s))_{t\ge s}}
\def\ss{(S(t))_{t\ge 0}}
\def\tt{(T(t,s))_{t\ge s}}
\def\rr{(R(t))_{t\ge 0}}

\title[Asymptotic Behavior of Solutions of FDE]{
Asymptotic Behavior of Solutions of periodic linear partial functional differential equations on the half line}

\author{Vu Trong Luong}
\address{Department of Mathematics,  Tay Bac University, Son La City, Son La, Vietnam}
\email{vutrongluong@gmail.com}
\author{Nguyen Huu Tri}
\address{Division of Mathematics, Truong THPT Trung Van, Phuong Trung Van, Nam Tu Liem, Hanoi, Vietnam}
\email{huutri1007@gmail.com}
\author{Nguyen Van Minh}

\address{Department of Mathematics and Statistics, University of Arkansas at Little Rock, 2801 S University Ave, Little Rock, AR 72204. USA}
\email{mvnguyen1@ualr.edu}
\thanks{This paper is partially supported by the grant B2018-TTB-11 of the Ministry of Education and Training of Vietnam}

\date{\today}
\subjclass[2000]{Primary: 34K06,  34C27; Secondary: 35B15, 35B40}
\keywords{Partial functional differential equation, asymptotic almost periodicity}

\begin{abstract}
We study conditions for the abstract linear functional differential 
equation $\dot{x}=Ax+F(t)x_t+f(t), t\ge 0$ to have asymptotic almost periodic solutions, where
$F(\cdot )$ is periodic, $f$ is asymptotic almost periodic. The main conditions are stated in terms of 
the spectrum of the monodromy operator associated with the equation and the circular spectrum of the forcing term $f$. 
The obtained results extend recent results on the subject. 
\end{abstract}
\maketitle

\section{Introduction} \label{section 1}
In this paper we consider the asymptotic behavior of solutions to equations of the form
\begin{equation}\label{FDE}
\frac{dx(t)}{dt}=Ax(t)+F(t)x_t +f(t),\ \ x \in {\X}, t \ge 0,
\end{equation}
where the (unbounded) linear operator $A$ generates a strongly continuous semigroup, $x_t\in C_r:=C([-r,0],{\X})$, $x_t(\theta ):=x(t+\theta )$, 
$r>0$ is a given positive real number,
$F(t)\varphi :=\int^0_{-r}d\eta (t,s)\varphi (s), \ \forall \varphi \in C_r$,
$\eta (t,\cdot ) : C_r\to L({\X})$ is periodic and continuous in $t$, of bounded variation, and $f$ is a ${\X}$-valued asymptotic almost periodic function on the half line. Existence of asymptotic almost periodic solutions to equations on the half line is an interesting topic in the qualitative theory of differential equations. The reader is referred to \cite{bathutrab,che,fin} and the references therein for more information. If one uses the spectral theory of functions to study this problem, then a big challenge arises when one estimates the spectrum of mild solutions due to the non-autonomousness of the equations. In a recent paper \cite{mingasste}, a concept of circular spectrum of functions on the whole line was introduced, and demonstrated as alternative approach to the problem. In this paper we further this idea for functions defined on the half line to study the asymptotic almost periodicity of solutions of (\ref{FDE}). The main results obtained in this paper are Theorems \ref{the 1}, \ref{the 2} where sufficient conditions for the asymptotic almost periodicity of solutions are given in terms of spectral properties of the input $f$ and the monodromy operator associated with the periodic homogeneous equations $x'=A(t)x(t)+F(t)x_t$. We note that in the paper \cite{luomin} the circular spectrum of functions on the real line is employed to study the asymptotic behavior of solutions of equations defined on the whole line. This being said, our study in this paper will complement that of \cite{luomin}. The last but not the least is that our assumption of period 1 on the evolution processes considered in the paper does not constitute any restriction of the obtained results.

\section{Preliminaries} \label{section 2}
\subsection{Notation}
Throughout the paper we will use the following notations: 
${\N}, {\Z},
{\R}, {\C}$ stand for 
the sets of natural, integer, real, complex numbers, respectively. 
$\Gamma$ denotes the unit circle
in the complex plane ${\C}$. 
${\X}$ will denote
a given complex Banach space. Given two Banach spaces 
${\X},{\Y}$ by $L({\X},{\Y})$  we will 
denote the space of all bounded linear operators from  ${\X}$ 
to  ${\Y}$. As usual,  $\sigma (T), \rho (T), R(\lambda ,T)$ 
are the notations of the spectrum, resolvent set and resolvent of the operator
 $T$.
The notations \ $BC({\R},{\X}), BUC({\R},{\X}),
AP({\X})$ \ will stand for the spaces of all ${\X}$-valued
bounded continuous, bounded uniformly continuous functions on ${\R}$ and its subspace of almost 
periodic (in Bohr's sense) functions, respectively.  Let us denote by 
\begin{eqnarray*}
C_0(\X) &:=& \{ f\in BUC(\R^+,\X): \ \lim_{t\to +\infty }f(t)=0 \} \\
AP(\X) &:=& \{ f\in BUC(\R,\X): \ f \ \mbox{is almost periodic}\} \\
AP(\R^+,\X) &:=& \{ g: \ g=f|_{\R^+} \ \mbox{for some} \ f \in AP(\X)\} \\
AAP(\R^+,\X) &:=& AP(\R^+,\X) \oplus C_0(\X) .
\end{eqnarray*}
Any element of $AAP(\R^+,\X)$ is called an asymptotic almost periodic function with values in $\X$.
For a positive number $r$ we denote $C_r:= C([-r,0],{\X})$, If $x(\cdot )$ is a function define on $(a,b)$, then
$x_t(\theta ):=x(t+\theta )$, if $t+\theta \in (a,b)$. In this case $x_t\in C_r$. If the interval $(a,b)$ is large enough, so $x_t$ is defined on an interval, and  the notation $x_\cdot$ will be used later to denote the function $t\mapsto x_t$.

\subsection{Almost periodic functions}
A subset $E\subset {\R}$ is said to be {\it relatively dense} 
if there exists a number $l>0$ ({\it inclusion length})
such that every interval $[a,a+l]$ contains at least one point of $E$.
Let $f$ be a continuous function on ${\R}$ taking values in a complex Banach space ${\X}$.
$f$ is said to be {\it almost periodic in the sense of Bohr}
if to every $\epsilon >0$ there corresponds a
relatively dense set $T(\epsilon , f)$ ({\it of $\epsilon$-periods })
such that
$$
\sup_{t \in {\R}}\| f(t+\tau )-f(t)\| \le \epsilon  , \ \forall \tau \in T(\epsilon , f).
$$
If $f$ is almost periodic function, then (approximation theorem \cite[Chap. 2]{levzhi}) it can be approximated
uniformly on ${\R}$  by a sequence of trigonometric polynomials, i.e.,
a sequence of functions in $t\in {\R}$ of the form
\begin{equation*}\label{trig pol}
P_n(t):=\sum_{k=1}^{N(n)}a_{n,k}e^{i\lambda_{n,k}t}, \ \ n=1,2,...; \lambda_{n,k} \in {\R}, a_{n,k}\in {\X}, t\in {\R}.
\end{equation*}
Of course, every function which can be approximated by a sequence of trigonometric polynomials is almost periodic.
Specifically, the exponents of the trigonometric polynomials (i.e., the reals $\lambda_{n,k}$ in (\ref{trig pol})) can be chosen from the set of 
all reals $\lambda$ ({\it Fourier exponents}) such that the following integrals ({\it Fourier coefficients}) 
$$
a(\lambda , f):=\lim_{T\to \infty}\frac{1}{2T}\int^T_{-T}f(t)e^{-i\lambda t}dt
$$
are different from $0$. As is known, there are at most countably such reals $\lambda$, the set of which will
be denoted by $\sigma_b(f)$ and called {\it Bohr spectrum} of $f$.

\medskip
We define
\begin{equation*}
AP(\R^+,\X) := \{ f|_{\R^+}, f\in AP(\X)\} .
\end{equation*}
From the definition of almost periodicity it is easy to prove that
\begin{equation*}
\sup_{t\in \R} \| f(t) \| =\sup_{t\in \R^+} \| f(t)\| , \ \mbox{for all} \ f\in AP(\X) .
\end{equation*}
Therefore, the operator of restriction of a function from $AP(\X)$ to the half line $\R^+$ is actually an invertible isometry from $AP(\X)$ onto $AP(\R^+,\X)$. Later on we will sometimes identify the function $f\in AP(\X)$ with its restriction to the half line $\R^+$.

\subsection{Circular Spectra of Functions on the half line}
Recall that the translation semigroup $(S(t))_{t\ge 0}$ defined as
\begin{equation*}
[S(t)x](\xi ) := x(t+\xi ), \quad t\ge 0, \xi \ge 0, \ x\in BUC(\R^+,\X) 
\end{equation*}
leaves $BUC(\R^+,\X)$ invariant.

\begin{definition}
A subspace of ${\cal M}$ of $BUC (\R^+,\X)$ is said to be admissible if it of the form $AP^+_{\Lambda}(\X) \oplus C_0(\X)$, where $AP^+_{\Lambda }(\X)$ is the restrictions of all almost periodic functions to the positive half line with Bohr spectrum contained in a closed set $\Lambda$ of real numbers.
\end{definition}
Note that for any given set of reals $\Lambda$ the subspace $AP^+_{\Lambda}(\X) $ of $BUC(\R^+,\X)$ is closed, so is $AP^+_{\Lambda}(\X) \oplus C_0(\X)$. This means that any admissible subspace ${\cal M}$ is a closed subspace of $BUC(\R^+,\X)$.

For a given admissible subspace ${\cal M}$ let us consider the quotient space $\Y :=BUC (\R^+,\X)/{\cal M} $. Since the translation semigroup in $BUC (\R^+,\X)$ leaves the closed subspace ${\cal M}$ invariant, this induces a strongly continuous semigroup in $\Y$, denoted by $(\bar S(t))_{t\ge 0}$. The following lemma is actually known as a remark in \cite{bathutrab}. However, for the completeness we will give a proof below.
\begin{lemma}\label{lem biinvariant}
Let ${\cal M}$ be an admissible subspace of $BUC (\R^+,\X)$. Under the above notations, the semigroup $(\bar S(t))_{t\ge 0}$ can be extended to a group of isometries in $\Y$.
\end{lemma}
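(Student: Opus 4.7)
The plan is to first derive an intrinsic formula for the quotient norm, namely
\[
\|[y]\|_\Y \;=\; \inf_{g\in AP^+_{\Lambda}(\X)} \limsup_{\xi\to\infty}\|y(\xi)-g(\xi)\|,
\]
and then use it to establish both the isometry property and the invertibility of $\bar S(t)$. The inequality ``$\ge$'' is immediate since a summand in $C_0(\X)$ does not affect the limsup. For ``$\le$'', given $g\in AP^+_{\Lambda}(\X)$ with $\limsup_{\xi\to\infty}\|y(\xi)-g(\xi)\|=L$, pick $N$ with $\|y(\xi)-g(\xi)\|\le L+\epsilon$ for $\xi\ge N$, multiply $y-g$ by a continuous cutoff equal to $1$ on $[0,N]$ and vanishing outside $[0,N+1]$, and observe that the resulting compactly supported correction lies in $C_0(\X)$ and reduces the sup norm of the difference to essentially $L$.

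With this formula in hand, the isometry $\|[\bar S(t)y]\|_\Y=\|[y]\|_\Y$ reduces to the change of variable $\eta=\xi+t$. The crux is that every $g\in AP^+_{\Lambda}(\X)$ extends uniquely to some $\tilde g\in AP_{\Lambda}(\X)$ on $\R$, via the isometric identification of $AP(\X)$ with $AP(\R^+,\X)$ noted earlier, and that $h:=\tilde g(\cdot-t)|_{\R^+}$ again lies in $AP^+_{\Lambda}(\X)$, since the Bohr spectrum is translation invariant. The assignment $g\mapsto h$ is a bijection of $AP^+_{\Lambda}(\X)$, and it converts the infimum over $g$ of the limsups of $\|y(\xi+t)-g(\xi)\|$ into the infimum over $h$ of the limsups of $\|y(\eta)-h(\eta)\|$, giving equality of the two quotient norms.

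To construct $\bar S(-t)$ for $t>0$, I would choose, for each $y\in BUC(\R^+,\X)$, a bounded continuous extension $R_t y$ to $\R^+$ of the function $\xi\mapsto y(\xi-t)$ originally defined on $[t,\infty)$, and set $\bar S(-t)[y]:=[R_t y]$. Two such extensions differ by a compactly supported continuous function, which lies in $C_0(\X)\subset\mathcal{M}$; and if $y=g+c\in\mathcal{M}$ one can choose $R_t g:=\tilde g(\cdot-t)|_{\R^+}\in AP^+_{\Lambda}(\X)$ and $R_t c$ any bounded continuous extension of $c(\cdot-t)$, which necessarily lies in $C_0(\X)$. Hence $[R_t y]\in \Y$ is independent both of the extension and of the chosen representative of $[y]$, so $\bar S(-t)$ is a well-defined linear operator on $\Y$. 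The identities $\bar S(t)\bar S(-t)=\bar S(-t)\bar S(t)=I$ follow by inspecting both sides on $[t,\infty)$; $\bar S(-t)$ is automatically isometric as the inverse of an isometry; and the group law $\bar S(t+s)=\bar S(t)\bar S(s)$ for all $t,s\in\R$ is a short verification by cases on signs.

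The step I expect to be the main obstacle is the well-definedness of the backward shift: the operation $y\mapsto y(\cdot-t)$ is not intrinsically defined for a function on $\R^+$, and its descent to $\Y$ crucially uses the almost-periodic structure of $\mathcal{M}$, specifically the unique extension of $g\in AP^+_{\Lambda}(\X)$ to $\tilde g\in AP_{\Lambda}(\X)$ which permits translation in either direction. Once this is handled, both the isometry and the group properties follow transparently from the intrinsic norm formula.
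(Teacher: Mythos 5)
Your proposal is correct, and it takes a genuinely different route from the paper. The paper argues directly with the quotient norm: it gets $\|\bar S(t)[f]\|\le\|[f]\|$ trivially, and for the reverse inequality it builds, from an arbitrary $g\in{\cal M}$, an auxiliary $h\in{\cal M}$ (equal to $g(\cdot-t)$ on $[t,\infty)$ and to $g(0)+f(\cdot)-f(t)$ on $[0,t)$) with $\|f-h\|=\|S(t)f-g\|$ exactly; invertibility of $\bar S(t)$ is then shown by an explicit lift for surjectivity and a backward-translation argument for injectivity. You instead first establish the intrinsic formula $\|[y]\|=\inf_{g\in AP^+_\Lambda(\X)}\limsup_{\xi\to\infty}\|y(\xi)-g(\xi)\|$, after which the isometry is a transparent change of variables (translation is a bijection of $AP^+_\Lambda(\X)$ because the Bohr spectrum is translation invariant), and you construct $\bar S(-t)$ explicitly as a backward shift descended to the quotient. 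Your version buys conceptual clarity -- the quotient norm only sees the tail at infinity, where translation acts bijectively -- at the cost of proving the norm formula first; it also makes fully explicit the one fact that both proofs secretly rest on, namely that the backward translate of an element of ${\cal M}$, extended arbitrarily on $[0,t]$, is again in ${\cal M}$ (via the unique almost periodic extension to $\R$ plus a compactly supported, hence $C_0$, correction). The paper uses this same fact twice (in the injectivity step and in the unproved assertion ``$h\in{\cal M}$'') but only justifies it in the first instance, so your treatment is, if anything, more complete on this point. Two minor details to record when writing it up: the cutoff correction in the ``$\le$'' direction of your norm formula and the extensions $R_ty$ must be checked to lie in $BUC(\R^+,\X)$ (uniform continuity survives a continuous modification on a compact interval), and $R_t$ itself need not be linear as a map on functions -- only the induced map on classes is, which is all you need.
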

\begin{proof} 
First, for each $t>0$ we will prove that $\bar S(t)$ is invertible. To this end, given an element $[g]\in \Y$, we will show that there exists a unique class $[f] \subset BUC(R^+,\X)$ as an element of $\Y$ such that $\bar S(t)[f]=[g]$. 
Note that the function $f$ defined as
\begin{equation*}
f(s):= \begin{cases} g(s-t), \ \mbox{if} \ s\ge t \\
g(0), \ \mbox{if} \  0\le s <t 
\end{cases}
\end{equation*}
is in $BUC(R^+,\X)$, and satisfies $S(t)f=g$, so $\bar S(t)[f]=[g]$. 
For the uniqueness we suppose that $\bar S(t) [k]=[0]$ for some class $[k]$. Then, $[k]=[0]$. In fact, if $k(\cdot )$ is a function in $BUC(\R^+,\X)$ such that $k(\cdot + t)$ is in  ${\cal M}:=AP^+_{\Lambda}(\X) \oplus C_0(\X)$, that is $k(\cdot +t) = k_1(\cdot ) + k_2(\cdot )$ where $k_1\in AP^+_{\Lambda }(\X), k_2 \in C_0(\X)$. 
It is well known that $k_2(\cdot )$ is the restriction of an almost periodic function on the real line to the positive half line, so $k_2$ may denote for an almost periodic function on $\R$. We will extend $k_2$ to the interval $[-t, \infty)$ by setting
\begin{equation*}
k_2(s):= \begin{cases} k_2(s), \ \mbox{if} \ s\ge 0 \\
k(s+t)-k_1(s), \ \mbox{if} \  -t\le s <0 .
\end{cases}
\end{equation*}
Obviously, $k_1(\cdot -t) \in AP^+_{\Lambda} (\X)$, and $k_2(\cdot -t) \in C_0(\X)$. Therefore, as $k(s) = k_1(s-t) +k_2(s-t)$, $k\in {\cal M}$, that í $[k]=[0]$, the uniqueness is proved. Therefore, $(\bar S(t))_{t\ge 0}$ is extendable to a strongly continuous group.

\medskip
Next, we prove that $(\bar S(t))_{t\in \R}$ consists of isometries. From the above argument for $t\ge 0$ we have
\begin{eqnarray*}
\| \bar S(t)[f]\| &=& \inf_{ g\in [f]} \| S(t)g\| \\
&=& \inf_{g\in [f]}  \sup_{s\ge 0} \| g(s+t)\| \\
&\le& \inf_{g\in [f]}  \sup_{s\ge 0} \| g(s)\| \\
&=& \inf_{g\in [f]} \| g\| \\
&=& \| [f]\| .
\end{eqnarray*}
Therefore, if $t\ge 0$, $\| \bar S(t)\| \le 1$. Next, for each function $f\in BUC(R^+,\X)$ and $g\in {\cal M}$ we define
$$
h (s):= \begin{cases} g(s-t), \ \mbox{if} \ s\ge t,\\
g(0)+f(s)-f(t), \ \mbox{if} \ 0\le s < t .
\end{cases}
$$
Then, $h\in {\cal M}$, and 
\begin{eqnarray*}
\| f-h\| &=& \sup_{s\ge 0} \| f(s)-h(s) \| \\
&=& \sup_{s\ge t} \| f(s)-g(s-t)\| =\sup_{s\ge 0} \| f(s+t)-g(s)\| \\
&=& \| S(t)f-g\| .
\end{eqnarray*}
Therefore,
\begin{eqnarray*}
\| [f]\| &= & \inf_{k\in {\cal M}} \| f-k\| \le \| f-h\| \\
&\le& \| S(t)f-g\| .
\end{eqnarray*}
Since $g$ is arbitrary this shows that
\begin{equation*}
\| [f]\| \le \inf_{g\in {\cal M}} \| S(t)f-g\| =\| [S(t)f]\| ,
\end{equation*}
and hence, $\| [f] \| \le \| \bar S(t)[f]\| $ for each $t>0$, $f\in BUC(R^+,\X)$. In view of the above inequality, $\| \bar S(t)[f]\| =\| [f]\| $ for all $t\ge 0$. If $t<0$, then, for each $f\in BUC(R^+,\X)$
\begin{eqnarray*}
\| [f]\| &=& \| \bar S(-t)\bar S(t) [f]\| =\| \bar S(t)[f]\| .
\end{eqnarray*}
Therefore, the group $(\bar S(t))_{t\in \R}$ is a strongly continuous group of isometries.
\end{proof}

\medskip
Let ${\cal M}$ be a fixed admissible subspace of $BUC(R^+,\X)$. For each $x(\cdot )\in BUC(\R,^+\X)$ let us consider the complex function ${\cal S}x(\lambda )$ in $\lambda \in \C \backslash \Gamma$ defined as
\begin{equation*}\label{transform}
{\cal S}x(\lambda ):= R(\lambda ,\bar S)x,\quad \lambda \in \C \backslash
\Gamma ,
\end{equation*}
where $\bar S := \bar S(1)$.
Since $\bar S$ is an invertible isometry, this transform is an analytic function
in $\lambda \in \C \backslash \Gamma $. 

\begin{lemma}\label{lem 1-0}
We have the following estimate:
\begin{equation}\label{est1}
\| {\cal S}x(\lambda )\| \le \frac{\| \bar x\| }{|1-|\lambda||}, \ |\lambda | \not= 1.
\end{equation}
\end{lemma}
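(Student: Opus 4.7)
The plan is to exploit the fact that, by Lemma \ref{lem biinvariant}, $\bar S = \bar S(1)$ is an invertible isometry on $\Y$. In particular $\|\bar S^n\|=1$ for every integer $n$, and the spectrum of $\bar S$ is contained in $\Gamma$, so every $\lambda\in\C$ with $|\lambda|\neq 1$ is in the resolvent set and the resolvent $R(\lambda,\bar S)$ is given by a convergent Neumann series. The whole estimate will come from bounding this series term-by-term.

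First I would treat the case $|\lambda|>1$. Here the standard expansion
\begin{equation*}
R(\lambda,\bar S) = \sum_{n=0}^{\infty} \lambda^{-n-1}\,\bar S^{n}
\end{equation*}
converges in operator norm because $\|\bar S^n\|=1$ and $|\lambda|^{-1}<1$. Applying it to the class $\bar x$ of $x$ in $\Y$ and using $\|\bar S^n \bar x\|\le \|\bar x\|$ gives
\begin{equation*}
\|{\cal S}x(\lambda)\| \le \sum_{n=0}^{\infty} |\lambda|^{-n-1}\|\bar x\| = \frac{\|\bar x\|}{|\lambda|-1} = \frac{\|\bar x\|}{|1-|\lambda||}.
\end{equation*}

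Next I would handle $|\lambda|<1$ by writing $\lambda-\bar S = -\bar S(I-\lambda\bar S^{-1})$, so
\begin{equation*}
R(\lambda,\bar S) = -\sum_{n=0}^{\infty} \lambda^{n}\,\bar S^{-(n+1)},
\end{equation*}
which converges since $\|\bar S^{-(n+1)}\|=1$ and $|\lambda|<1$. Estimating termwise as above gives $\|{\cal S}x(\lambda)\|\le \|\bar x\|/(1-|\lambda|)$, which again equals $\|\bar x\|/|1-|\lambda||$. Combining the two cases yields \eqref{est1}.

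There is really no obstacle here beyond the bookkeeping: the essential content is entirely the isometry property proved in Lemma \ref{lem biinvariant}, which forces $\sigma(\bar S)\subset\Gamma$ and gives uniform bounds on the powers $\bar S^{\pm n}$ that feed into the two Neumann series. If one wanted to avoid splitting into cases, one could write a single unified expression, but the two-case presentation is cleanest and mirrors the way the bound $1/|1-|\lambda||$ naturally decomposes.
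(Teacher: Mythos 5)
Your argument is correct: both Neumann expansions are valid (the one for $|\lambda|<1$ using the invertibility of $\bar S$ from Lemma \ref{lem biinvariant}), and the termwise bounds with $\|\bar S^{\pm n}\|=1$ do give $\|\bar x\|/(|\lambda|-1)$ and $\|\bar x\|/(1-|\lambda|)$ in the two cases, which is exactly \eqref{est1}. However, the paper proves this differently and more economically: it uses only the reverse triangle inequality together with the isometry property to get, for every $\bar y\in\Y$,
\begin{equation*}
\|(\lambda I-\bar S)\bar y\| \;\ge\; \bigl|\,\|\lambda\bar y\|-\|\bar S\bar y\|\,\bigr| \;=\; \bigl|\,|\lambda|-1\,\bigr|\cdot\|\bar y\|,
\end{equation*}
and then substitutes $\bar y=R(\lambda,\bar S)\bar x$ to read off the estimate in one stroke, with no case distinction and no series. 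The trade-off is that the paper's route presupposes that $R(\lambda,\bar S)$ exists for $|\lambda|\neq 1$ (which it justifies beforehand by noting that $\bar S$ is an invertible isometry, hence $\sigma(\bar S)\subset\Gamma$), whereas your Neumann-series computation establishes the existence of the resolvent and the bound simultaneously, so your version is slightly more self-contained at the cost of the two-case bookkeeping. Either proof is acceptable.
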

\begin{proof}
For any element $\bar y \in \Y$ , we have
\begin{eqnarray*}
\| (\lambda I-\bar S)\bar y\| &=& \| \lambda \bar y -\bar S\bar y\| \\
&\ge & | \| \lambda \bar y\| -\| \bar S\bar y\| | \\
&=& | |\lambda| \cdot \| \bar y\|  -\| \bar y \| | \\
&=& (1-| \lambda | )  \cdot \| \bar y\| .
\end{eqnarray*}
If $\lambda | \not = 1$,  
we choose $\bar y:= R(\lambda ,\bar S)\bar x$. Therefore,
\begin{eqnarray*}
\| \bar x\| &=& \| (\lambda I-\bar S)\bar R(\lambda ,\bar S)\bar x\| \\
&\ge & (1-| \lambda | )  \cdot \| R(\lambda ,\bar S)\bar x \| .
\end{eqnarray*}
This yields (\ref{est1}).
\end{proof}

\begin{definition}\rm
The {\it circular spectrum} of $x\in BUC(\R^+,\X)$ with respect to a given admissible subspace ${\cal M}$ is defined to
be the set of all $\xi_0\in \Gamma$ such that ${\cal S}x(\lambda )$
has no analytic extension into any neighborhood of $\xi_0$ in the
complex plane. This spectrum of $x$ is denoted by $\sigma_{\cal M} (x)$.
 When ${\cal M} =C_0(\X)$ it will be called for short {\it the spectrum of $g$} if this does not
cause any confusion. We will denote by $\rho (x)$ the set $\Gamma
\backslash \sigma (x)$.
\end{definition}

\medskip
The following lemma justifies the introduction of these concepts of spectra.
\begin{lemma}\label{lem 1}
Let ${\cal M}$ be an admissible subspace of $BUC(R^+,\X)$, and let $x(\cdot )\in BUC(\R^+,\X)$. Then, 
\begin{enumerate}
\item $x(\cdot )$ is in ${\cal M}$ if and only if $\sigma_{\cal M} (x) =\emptyset$,
\item If $Q$ is an operator in $BUC(\R^+,\X)$ that commutes with $S(1)$ and leaves ${\cal M}$ invariant, then for each $x(\cdot )\in BUC(\R^+,\X)$,
\begin{eqnarray}
\sigma _{\cal M}(Qx) &\subset & \sigma_{\cal M} (x).
\end{eqnarray}
\end{enumerate}
\end{lemma}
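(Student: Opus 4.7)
The plan is to treat the two statements separately, using the fact established (via Lemma \ref{lem biinvariant}) that $\bar S$ is an invertible isometry on $\Y$, so $\sigma(\bar S)\subset \Gamma$ and ${\cal S}x(\lambda)=R(\lambda,\bar S)[x]$ is a priori defined and analytic on $\C\setminus\Gamma$ with the decay bound of Lemma \ref{lem 1-0}.

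For part (i), the direction $x\in{\cal M}\Rightarrow\sigma_{\cal M}(x)=\emptyset$ is immediate: $[x]=0$ in $\Y$, so ${\cal S}x\equiv 0$ on $\C\setminus\Gamma$, which is trivially analytic everywhere on $\C$. For the converse, suppose $\sigma_{\cal M}(x)=\emptyset$. Then at every point $\xi_0\in\Gamma$ the function ${\cal S}x$ admits an analytic continuation to a neighborhood, and by a standard gluing argument (using that any two such extensions agree on the overlap with $\C\setminus\Gamma$) these local extensions patch together with ${\cal S}x$ itself to give an entire $\Y$-valued function. The bound (\ref{est1}) from Lemma \ref{lem 1-0} forces ${\cal S}x(\lambda)\to 0$ as $|\lambda|\to\infty$, so by the vector-valued Liouville theorem ${\cal S}x\equiv 0$ on $\C$. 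Picking any $|\lambda|>1$, we then get $[x]=(\lambda I-\bar S)R(\lambda,\bar S)[x]=0$, i.e., $x\in{\cal M}$.

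For part (ii), the key is to descend $Q$ to the quotient. Since $Q$ is bounded on $BUC(\R^+,\X)$ and leaves ${\cal M}$ invariant, it induces a bounded linear $\bar Q$ on $\Y$ by $\bar Q[x]:=[Qx]$. The hypothesis $QS(1)=S(1)Q$ descends to $\bar Q\bar S=\bar S\bar Q$, and hence $\bar Q$ commutes with $R(\lambda,\bar S)$ for every $\lambda\in\C\setminus\Gamma$. Consequently
\begin{equation*}
{\cal S}(Qx)(\lambda)=R(\lambda,\bar S)[Qx]=\bar Q R(\lambda,\bar S)[x]=\bar Q\,{\cal S}x(\lambda),\qquad \lambda\in\C\setminus\Gamma.
\end{equation*}
If $\xi_0\in\Gamma\setminus\sigma_{\cal M}(x)$, then ${\cal S}x$ extends analytically to some neighborhood $U$ of $\xi_0$, and composing with the bounded operator $\bar Q$ preserves analyticity, so $\bar Q\circ{\cal S}x$ yields an analytic extension of ${\cal S}(Qx)$ to $U$. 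Therefore $\xi_0\notin\sigma_{\cal M}(Qx)$, which is the desired inclusion.

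The one step I expect to require a little care is the patching in part (i): one must verify that the local analytic extensions across $\Gamma$ together with ${\cal S}x$ on $\C\setminus\Gamma$ determine a single-valued entire function, so that Liouville can be applied. This is where the estimate of Lemma \ref{lem 1-0} plays its essential role, forcing vanishing at infinity; the rest of both arguments is formal manipulation with the resolvent.
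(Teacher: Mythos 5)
Your proposal is correct and follows essentially the same route as the paper: for (i) the forward direction is the triviality $[x]=0\Rightarrow{\cal S}x\equiv 0$, and the converse is the Liouville argument based on the estimate of Lemma \ref{lem 1-0} followed by injectivity of the resolvent; for (ii) it is the descent of $Q$ to $\bar Q$ on the quotient and the identity $R(\lambda,\bar S)\bar Q=\bar Q R(\lambda,\bar S)$. Your version merely spells out two points the paper leaves implicit (the patching of local extensions into a single entire function, and why composing with the bounded operator $\bar Q$ preserves analyticity), which is fine.
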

\begin{proof}
(i): If $x(\cdot )$ is in ${\cal M}$, then $\bar x =0$ in $\Y$. Therefore, $g(\lambda ):= R(\lambda , \bar S)\bar x=0$ for all $\lambda \in \C \backslash \Gamma$, so it has a natural extension to the whole complex plane, and thus $\sigma _{\cal M}(x)=\emptyset$. Conversely, let $\sigma_{\cal M} (x) =\emptyset$. Then, the function $g(\lambda ):= R(\lambda , \bar S)\bar x$ is an entire function. By (\ref{est1}), the function is bounded, so by Liouville Theorem it is a constant. Moreover, this constant must be zero also by (\ref{est1}). Since $R(\lambda ,\bar S)$ is an injective, this yields that $\bar x=0$. That is, $x(\cdot )$ is in ${\cal M}$.

\medskip
(ii): This claim is straightforward as $R(\lambda , \bar S )\bar Q g = \bar Q R(\lambda , \bar S)g$, where $\bar S$ and $\bar Q$ are operators induced by $S$ and $Q$ in the quotient spaces $BUC(\R^+,\X)/{\cal M}$.
\end{proof}

In the same lines as in \cite{min} the following version of a Gelfand's Theorem can be obtained:
\begin{lemma}\label{lem 2}
 Assume that $\bar x$ is any point in $\Y$, and the complex function ${\cal S}(\lambda
):= R(\lambda ,\bar S )\bar x$ has the point $\lambda = \xi_0\in \Gamma$ as an isolated singular point. Then, $\xi_0$ is either a removable singular point or a pole of first order.
\end{lemma}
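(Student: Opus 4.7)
The plan is to expand $\mathcal{S}(\lambda)=R(\lambda,\bar{S})\bar{x}$ as a Laurent series about the isolated singularity $\xi_0$ and to show that its principal part consists of at most the $(\lambda-\xi_0)^{-1}$ term. The isolation hypothesis furnishes $\delta>0$ such that $\mathcal{S}$ extends analytically to the punctured disk $0<|\lambda-\xi_0|<\delta$ and therefore admits there a Laurent expansion
\[
\mathcal{S}(\lambda)=\sum_{n=-\infty}^{\infty}c_n(\lambda-\xi_0)^n.
\]
Substituting this into the resolvent identity $(\lambda I-\bar{S})\mathcal{S}(\lambda)=\bar{x}$ and equating coefficients of $(\lambda-\xi_0)^n$ gives the recursion $c_{n-1}=(\bar{S}-\xi_0 I)c_n$ for every $n\ne 0$, whence $c_{-k}=(\bar{S}-\xi_0 I)^{k-1}c_{-1}$ for all $k\ge 1$. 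The claim therefore reduces to proving $(\bar{S}-\xi_0 I)c_{-1}=0$.

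Two ingredients should combine to give this. First, convergence of the Laurent expansion throughout the punctured disk forces $\|c_{-k}\|^{1/k}\to 0$, so the iterates $(\bar{S}-\xi_0 I)^{k}c_{-1}$ decay super-exponentially. Second, the isometric nature of $\bar{S}$ supplied by Lemma~\ref{lem biinvariant} precludes nontrivial Jordan structure at a unimodular spectral value: indeed, if $\bar{S}u=\xi_0 u+v$ with $\bar{S}v=\xi_0 v$ and $v\ne 0$, then $\bar{S}^n u=\xi_0^n u+n\xi_0^{n-1}v$, whose norm grows linearly in $n$, contradicting $\|\bar{S}^n u\|=\|u\|$. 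Iterating this rank-two obstruction along the chain $c_{-1},c_{-2},\ldots$ excludes every finite Jordan chain of length $\ge 2$; what remains is to rule out an infinite chain.

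Following \cite{min}, the infinite-chain case is handled by expanding
\[
\bar{S}^{n}c_{-1}=\sum_{k=0}^{n}\binom{n}{k}\xi_0^{n-k}c_{-1-k},\qquad \bar{S}^{-n}c_{-1}=\xi_0^{-n}\sum_{k=0}^{\infty}(-1)^k\binom{n+k-1}{k}\xi_0^{-k}c_{-1-k},
\]
and combining the isometric identity $\|\bar{S}^{\pm n}c_{-1}\|=\|c_{-1}\|$ with the super-exponential decay of $\|c_{-k}\|$ via a Hahn--Banach functional separating $c_{-2}$ and a generating-function argument; the resulting uniform bounds on the binomial convolutions of the sequence $(c_{-k})$ are incompatible with $c_{-2}\ne 0$. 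Thus $c_{-k}=0$ for every $k\ge 2$, and $\xi_0$ is either removable (if $c_{-1}=0$) or a first-order pole. The main obstacle is this last step: eliminating infinite Jordan chains in the Banach-space setting requires the interplay of Lemma~\ref{lem 1-0}, the isometry of $\bar{S}$, and the Laurent decay, and is precisely the content of the Gelfand-type result from \cite{min}.
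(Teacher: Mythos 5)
Your setup is correct and is the standard route to this kind of Gelfand-type statement: the Laurent expansion in the punctured disk, the coefficient recursion $c_{n-1}=(\bar S-\xi_0 I)c_n$ obtained from $(\lambda I-\bar S)\mathcal S(\lambda)=\bar x$, the identity $c_{-k}=(\bar S-\xi_0 I)^{k-1}c_{-1}$, the super-exponential decay $\|c_{-k}\|^{1/k}\to 0$ from convergence of the principal part, and the reduction to showing $(\bar S-\xi_0 I)c_{-1}=0$. Your ``rank-two obstruction'' ($\bar S^n u=\xi_0^n u+n\xi_0^{n-1}v$ growing linearly against $\|\bar S^n u\|=\|u\|$) is a complete and correct proof that any \emph{pole} has order at most one, i.e.\ it disposes of every terminating chain. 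For what it is worth, the paper itself offers no proof of this lemma at all --- it only remarks that the result ``can be obtained in the same lines as in \cite{min}'' --- so your reduction already goes further than the text does.

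The genuine gap is the essential-singularity case, which is precisely the content of the lemma, and your treatment of it is a description of a hoped-for contradiction rather than an argument. Concretely: if $c_{-2}\neq 0$ then by the recursion $c_{-k}\neq 0$ for all $k$, so there is no terminating chain and the rank-two obstruction never applies; everything then rests on your final step. The two displayed identities for $\bar S^{\pm n}c_{-1}$ are correct (the backward one converges because $\binom{n+k-1}{k}$ grows only polynomially in $k$ while $\|c_{-1-k}\|$ decays super-exponentially), but the assertion that the resulting bounds $\bigl\|\sum_{k}\binom{n}{k}\xi_0^{-k}c_{-1-k}\bigr\|=\|c_{-1}\|=\bigl\|\sum_{k}(-1)^k\binom{n+k-1}{k}\xi_0^{-k}c_{-1-k}\bigr\|$ are ``incompatible with $c_{-2}\neq 0$'' is exactly the nontrivial claim, and it is not obvious: after applying a Hahn--Banach functional one only knows that two binomial-type transforms of a rapidly decreasing scalar sequence stay bounded in $n$, and a naive term-by-term estimate of either sum alone yields bounds growing like $n\,\|c_{-2}\|+O(n^2\sup_{k\ge 2}\|c_{-1-k}\|)$, which is not self-contradictory. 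Extracting $c_{-2}=0$ requires the full Gelfand/Phragm\'en--Lindel\"of machinery (bounding an entire vector-valued function of exponential type zero on all of $\C$ from its boundedness on $\Z$, or an equivalent Tauberian device), and this is what \cite{min} actually supplies. As written, your proposal proves the lemma only under the additional assumption that $\xi_0$ is a pole; the exclusion of an essential singularity is asserted, not proved.
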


Before proceeding we introduce a new notation:  let $0\not= z\in \C$ such that
$z= re^{i\varphi}$ with reals $r=|z|,\varphi$, and let $F(z)$ be any complex
function. Then, (with $s$ larger than $r$) we define
\begin{equation*}
\lim_{\lambda \downarrow z} F(\lambda ):= \lim_{s\downarrow r}F(s e^{i\varphi}).
\end{equation*}
As a consequence of Lemma \ref{lem 2} we have the following:
\begin{lemma}\label{lem 3}
Let $\xi_0\in\Gamma$ be an isolated singular point of ${\cal S}(\lambda )= R(\lambda ,\bar S)\bar x$ with a given $\bar x\in \Y$. Then, this singular point $\xi_0$ is removable provided that
\begin{equation}\label{erg}
\lim_{\lambda \downarrow \xi_0} (\lambda -\xi_0 )R(\lambda ,\bar S)\bar x =0 .
\end{equation}
\end{lemma}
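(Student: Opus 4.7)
My plan is to combine Lemma \ref{lem 2} with the standard Laurent expansion at a pole of first order, and then read off the coefficient $a_{-1}$ as a one-sided radial limit.

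First, by Lemma \ref{lem 2}, the isolated singular point $\xi_0 \in \Gamma$ is either removable or a pole of first order. So the goal reduces to ruling out the latter case under the hypothesis (\ref{erg}). I will argue by contradiction: assume $\xi_0$ is a pole of first order. Then there exists a punctured neighborhood of $\xi_0$ in which $R(\lambda,\bar S)\bar x$ admits the Laurent expansion
\begin{equation*}
R(\lambda ,\bar S)\bar x \;=\; \frac{a_{-1}}{\lambda -\xi_0} + \sum_{n\ge 0} a_n (\lambda -\xi_0)^n,
\end{equation*}
with $a_{-1}\ne 0$, the series converging normally in the punctured disk.

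Next, I multiply through by $(\lambda -\xi_0)$ to obtain
\begin{equation*}
(\lambda -\xi_0) R(\lambda ,\bar S)\bar x \;=\; a_{-1} + \sum_{n\ge 0} a_n (\lambda -\xi_0)^{n+1}.
\end{equation*}
Writing $\xi_0 = e^{i\varphi_0}$ and taking the radial limit $\lambda = s e^{i\varphi_0}$ with $s \downarrow 1$ (as prescribed by the notation introduced just before the lemma), every term in the tail sum tends to $0$, while the constant term is $a_{-1}$. Therefore
\begin{equation*}
\lim_{\lambda \downarrow \xi_0}(\lambda -\xi_0) R(\lambda ,\bar S)\bar x \;=\; a_{-1}.
\end{equation*}
By hypothesis (\ref{erg}) this limit is $0$, so $a_{-1}=0$, contradicting the assumption that $\xi_0$ is a first-order pole. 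Hence $\xi_0$ is removable, as claimed.

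There is no real obstacle here: the only point that deserves care is checking that the radial approach $\lambda \downarrow \xi_0$ from outside the unit circle lies in the domain of holomorphy of $R(\lambda ,\bar S)\bar x$ (that is, in the punctured neighborhood of $\xi_0$ on which the Laurent expansion is valid), which is immediate from the isolatedness of $\xi_0$ as a singular point and from the fact that the exterior of $\Gamma$ is contained in the resolvent set of $\bar S$, thanks to $\bar S$ being an invertible isometry (Lemma \ref{lem biinvariant}).
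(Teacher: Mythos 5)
Your proof is correct and is essentially the argument the paper intends: the paper states Lemma \ref{lem 3} without proof as ``a consequence of Lemma \ref{lem 2}'', and your reduction to the first-order-pole case followed by reading off the residue $a_{-1}$ from the radial limit (which stays in the domain of holomorphy since the exterior of $\Gamma$ lies in the resolvent set of the isometry $\bar S$) is exactly the standard way to fill in that step.
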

As a consequence of the Lemma \ref{lem 3} we can prove the following result that will be used as a main tool later on:
\begin{theorem}\label{the tec}
Let ${\cal M}$ be an admissible subspace of $BUC(R^+,\X)$, and let $x(\cdot )$ be an element of $BUC(\R^+,\X)$ such that the set $\sigma_{\cal M}(x)$  is countable, and let the following condition hold for
each $\xi_0\in \sigma _{\cal M}( x)$  
\begin{equation}
\lim_{\lambda \downarrow \xi_0} (\lambda -\xi_0 )R(\lambda ,\bar
S)\bar x =0 .
\end{equation}
Then,  $x(\cdot )$ is in ${\cal M}$. 
\end{theorem}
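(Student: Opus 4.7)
The plan is to prove the contrapositive: assume $\sigma_{\cal M}(x)\neq \emptyset$ and derive a contradiction, thereby forcing $\sigma_{\cal M}(x)=\emptyset$, which by Lemma \ref{lem 1}(i) yields $x(\cdot)\in{\cal M}$. The argument will combine a Baire category step (to extract an isolated point of the spectrum) with the Gelfand-type rigidity given by Lemmas \ref{lem 2} and \ref{lem 3}.

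First, I would observe that $\sigma_{\cal M}(x)$ is, by construction, a closed subset of the unit circle $\Gamma$, hence a compact metric space in its own right. If $\sigma_{\cal M}(x)$ were nonempty and countable but contained no isolated point (relative to itself), it would be a nonempty perfect subset of a complete metric space, and therefore uncountable by the classical Baire category argument. Consequently there must exist a point $\xi_0\in\sigma_{\cal M}(x)$ isolated in $\sigma_{\cal M}(x)$: there is an open disk $V\subset\C$ around $\xi_0$ with $V\cap\sigma_{\cal M}(x)=\{\xi_0\}$.

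Next, I would show that this isolated point is actually an isolated singularity of the analytic function ${\cal S}x(\lambda)=R(\lambda,\bar S)\bar x$. Every point $\xi\in V\cap\Gamma$ with $\xi\neq\xi_0$ lies in $\rho(x)=\Gamma\setminus\sigma_{\cal M}(x)$, so by the very definition of circular spectrum, ${\cal S}x(\lambda)$ has an analytic extension across $\xi$. Combining these extensions with the original definition on $\C\setminus\Gamma$, one obtains an analytic function on $V\setminus\{\xi_0\}$, so $\xi_0$ is an isolated singularity of (the extended) ${\cal S}x$. By Lemma \ref{lem 2}, $\xi_0$ is either removable or a simple pole. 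The hypothesis
\[
\lim_{\lambda\downarrow\xi_0}(\lambda-\xi_0)R(\lambda,\bar S)\bar x = 0
\]
rules out a first-order pole, so Lemma \ref{lem 3} forces $\xi_0$ to be a removable singularity of ${\cal S}x$, meaning ${\cal S}x$ extends analytically to a full neighborhood of $\xi_0$. But this means $\xi_0\in\rho(x)$, contradicting $\xi_0\in\sigma_{\cal M}(x)$.

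This contradiction gives $\sigma_{\cal M}(x)=\emptyset$, and Lemma \ref{lem 1}(i) finishes the proof. The only delicate step is the first one, the Baire-category extraction of an isolated point of a countable closed set in $\Gamma$; everything else is a direct application of the lemmas already proved. I expect the write-up to be short, the main care being in verifying that an isolated point of the spectrum really is an isolated singularity of the resolvent transform, which uses only the definition of $\sigma_{\cal M}(x)$ and the analyticity of $R(\lambda,\bar S)\bar x$ on $\C\setminus\Gamma$.
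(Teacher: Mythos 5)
Your proof is correct and follows exactly the route the paper intends (it states the theorem "as a consequence of Lemma \ref{lem 3}" and omits the details): since a nonempty countable closed subset of $\Gamma$ must have an isolated point by Baire category, that point is an isolated singularity of $R(\lambda,\bar S)\bar x$, which the ergodicity hypothesis together with Lemmas \ref{lem 2} and \ref{lem 3} forces to be removable, contradicting membership in $\sigma_{\cal M}(x)$; then Lemma \ref{lem 1}(i) concludes. The gluing of the local analytic extensions along $V\cap\Gamma\setminus\{\xi_0\}$, which you flag as the delicate point, goes through by the identity theorem since overlapping disks intersect in a connected set containing an open piece of $\C\setminus\Gamma$ where all extensions coincide with $R(\lambda,\bar S)\bar x$.
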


\subsection{Beurling spectrum and almost periodic functions}
Below we will recall the concept of Beurling spectrum of a function. 
We denote by $F$ the Fourier transform, i.e.
\begin{equation}
({F}f)(s):= \int^{+\infty }_{-\infty }e^{-ist}f(t)dt
\end{equation}
$(s\in {\R}, f\in L^1({\R}))$. Then the {\it Beurling spectrum}\index{Beurling spectrum} 
of $u\in BUC({\R},{\X})$ is defined to be the following set
\begin{eqnarray*}
sp(u)&:=& \{ \xi \in {\R}: \forall \epsilon >0 \ \ \exists f\in L^1({\R}),\\
&& \hspace{1cm} supp{F}f \subset (\xi -\epsilon ,\xi +\epsilon ), f*u \not= 0\}
\end{eqnarray*}
where 
$$
f*u(s):=\int^{+\infty }_{-\infty }f(s-t)u(t)dt .
$$

\medskip
Throughout the paper, if $f\in AP(\X)$ we will use
the relation 
\begin{equation}\label{bohr}
sp(f)= \overline{\sigma_b(f)}.
\end{equation}
Let $g\in AP(\R^+,\X)$. Then, there exists a unique $g_\R \in AP(\X)$ whose restriction to $\R^+$ is exactly $g$. \begin{proposition}\label{pro}
Let $g\in AP(\R^+,\X)$. Then
\begin{equation}
\sigma (g)=  \overline{e^{i sp(g_\R )}}.
\end{equation}
\end{proposition}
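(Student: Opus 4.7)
The plan is to establish the two inclusions in $\sigma(g) = \overline{e^{i\,sp(g_\R)}}$ separately. By relation (\ref{bohr}) we have $sp(g_\R) = \overline{\sigma_b(g_\R)}$, and since $e^{i\cdot}$ is continuous one has $\overline{e^{i\,sp(g_\R)}} = \overline{e^{i\sigma_b(g_\R)}}$; it thus suffices to prove $\sigma(g) = \overline{e^{i\sigma_b(g_\R)}}$.

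For the inclusion $\sigma(g) \subseteq \overline{e^{i\sigma_b(g_\R)}}$, I would consider the closed linear span $\Y_g$ in $\Y := BUC(\R^+,\X)/C_0(\X)$ of the orbit $\{\bar S(k)\bar g : k \in \Z\}$. By Lemma \ref{lem biinvariant}, the isometric group $(\bar S(t))_{t\in\R}$ leaves $\Y_g$ invariant, so $T := \bar S(1)|_{\Y_g}$ is an invertible isometry on $\Y_g$. I would compute $\sigma(T) = \overline{e^{i\sigma_b(g_\R)}}$. For the inclusion $\overline{e^{i\sigma_b(g_\R)}}\subseteq\sigma(T)$, the ergodic mean $\frac{1}{N}\sum_{k=0}^{N-1}e^{-i\lambda k}\bar S(k)\bar g$ converges uniformly, for each $\lambda \in \sigma_b(g_\R)$, to a nonzero element of $\Y_g$ that is an eigenvector of $T$ with eigenvalue $e^{i\lambda}$; this places $e^{i\sigma_b(g_\R)}$ in the point spectrum, and closedness yields the inclusion. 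For the reverse inclusion, Bohr spectra are closed under uniform convergence (since $|a(\lambda, f_n) - a(\lambda, f)| \le \|f_n - f\|_\infty$), so every representative of an element of $\Y_g$ lies in $AP^+_{\sigma_b(g_\R)}(\X)$, on which the time-$1$ shift has spectrum contained in $\overline{e^{i\sigma_b(g_\R)}}$. Once $\sigma(T) = \overline{e^{i\sigma_b(g_\R)}}$ is established, $R(\mu, T)\bar g$ is analytic on $\C\setminus\overline{e^{i\sigma_b(g_\R)}}$ and agrees with $R(\mu, \bar S)\bar g$ on the common resolvent set $\C\setminus\Gamma$, providing the desired analytic extension across any $\xi_0 \notin \overline{e^{i\sigma_b(g_\R)}}$.

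For the inclusion $\sigma(g) \supseteq \overline{e^{i\sigma_b(g_\R)}}$, I would argue by contradiction. Suppose $\xi_0 := e^{i\eta_0}$ with $\eta_0\in\sigma_b(g_\R)$ is a regular point of $R(\mu,\bar S)\bar g$; then $\lim_{\mu\downarrow\xi_0}(\mu-\xi_0)R(\mu,\bar S)\bar g = 0$ in $\Y$. Expanding the Neumann series $R(\mu,\bar S)\bar g = \sum_{k=0}^\infty \mu^{-(k+1)}\bar S(k)\bar g$ for $\mu = r\xi_0$, $r>1$, and rearranging telescopically, this limit becomes
\begin{equation*}
\lim_{r\downarrow 1}(1 - r^{-1})\sum_{k=0}^\infty r^{-k}e^{-ik\eta_0}\,\overline{g_\R(\cdot+k)|_{\R^+}}.
\end{equation*}
The representative $\Phi_r(t) := (1-r^{-1})\sum_{k=0}^\infty r^{-k}e^{-ik\eta_0}g_\R(t+k)$ is almost periodic with $\|\Phi_r\|_\infty \le \|g_\R\|_\infty$, and interchanging the Bohr mean with the uniformly convergent sum (using translation invariance $M_t(g_\R(t+k)e^{-i\eta_0 t}) = e^{ik\eta_0}a(\eta_0, g_\R)$) produces the sharp identity $a(\eta_0,\Phi_r) = a(\eta_0, g_\R)$ for every $r>1$. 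Since $AP(\X)\cap C_0(\X) = \{0\}$, the quotient norm of an almost periodic function equals its sup norm, so
\begin{equation*}
\|\overline{\Phi_r|_{\R^+}}\|_\Y = \|\Phi_r\|_\infty \ge \|a(\eta_0, \Phi_r)\| = \|a(\eta_0, g_\R)\| > 0,
\end{equation*}
contradicting convergence to $0$ in $\Y$. Hence $e^{i\eta_0}\in\sigma(g)$ for every $\eta_0\in\sigma_b(g_\R)$, and closedness of $\sigma(g)$ gives $\overline{e^{i\sigma_b(g_\R)}} \subseteq \sigma(g)$.

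The principal obstacle I foresee lies in the first inclusion, specifically in verifying the spectral bound $\sigma(T) \subseteq \overline{e^{i\sigma_b(g_\R)}}$: the identification of the spectrum of the time-$1$ shift on the subspace $AP^+_{\sigma_b(g_\R)}(\X)$ of almost periodic functions with prescribed Bohr spectrum is classical in spirit but demands careful handling in this Banach-space quotient setting. The second inclusion, by contrast, reduces cleanly to the $r$-independent identity $a(\eta_0,\Phi_r) = a(\eta_0, g_\R)$.
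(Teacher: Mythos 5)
Your proposal is correct in outline, but it takes a genuinely different route from the paper's. The paper argues by reduction to the whole line: it quotes the identity $\sigma(g_\R)=\overline{e^{i\,sp(g_\R)}}$ for the circular spectrum of $g_\R$ as a function on all of $\R$ (the weak spectral mapping theorem, via \cite{mingasste}), and then shows $\sigma(g)=\sigma(g_\R)$ by exploiting that both the restriction map $AP(\X)\to AP(\R^+,\X)$ and the quotient projection modulo $C_0(\X)$ are isometric on almost periodic functions, so that $R(\lambda,\bar S)\bar g$ may be identified with $R(\lambda,S)g_\R$ and analytic extensions transfer in both directions. You instead prove the two inclusions directly on the half line: the containment $\sigma(g)\subseteq\overline{e^{i\sigma_b(g_\R)}}$ by restricting $\bar S(1)$ to a closed bi-invariant subspace of the quotient sitting isometrically inside $AP^+_{\overline{\sigma_b(g_\R)}}(\X)$ (for this you should take $\Lambda=\overline{\sigma_b(g_\R)}$, since admissibility requires a closed set, and you should note explicitly that the resolvent of $\bar S(1)$ at regular points on $\Gamma$ still leaves the invariant subspace invariant, being a limit of resolvents off $\Gamma$, so that the spectrum does not grow under restriction); and the reverse containment by the Abel-mean computation yielding the $r$-independent identity $a(\eta_0,\Phi_r)=a(\eta_0,g_\R)$, combined with the fact that the quotient norm modulo $C_0(\X)$ agrees with the sup norm on almost periodic classes. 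That second half is a nice, essentially self-contained argument that Fourier exponents produce genuine singularities, and it needs no external input. Both routes, however, lean on the same nontrivial classical ingredient for the first inclusion --- the spectral bound $\sigma\bigl(S(1)|_{AP_\Lambda}\bigr)\subseteq\overline{e^{i\Lambda}}$ for the translation on almost periodic functions with prescribed Bohr spectrum --- you through the step you rightly flag as the delicate point, the paper through the cited whole-line weak spectral mapping theorem. Your version buys a self-contained lower bound and avoids the whole-line detour at the cost of length; the paper's version is shorter but entirely dependent on the result of \cite{mingasste}.
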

\begin{proof}
By the Weak Spectral Mapping Theorem (see e.g. \cite{engnag}) we have
\begin{equation*}
\sigma (g_\R)= \overline{e^{i sp(g_\R )}},
\end{equation*}
where $\sigma (g_\R)$ denotes the circular spectrum of $g_\R$ as a function defined on the whole line $\R$ (see \cite{mingasste}).
Therefore, it suffices to show that
\begin{equation*}
\sigma (g) = \sigma (g_\R).
\end{equation*} 
For $|\lambda | \not =1$, by the isometry mentioned above (that is, the isometry $AP(\X) \ni f \mapsto f|_{\R^+}\in AP(\R^+,\X)$) we can identify $R(\lambda ,S)g$ with $R(\lambda ,S)g_\R$. That means, the complex function $R(\lambda ,S)g$ can be identified as $R(\lambda ,S)g_\R$ for any $\lambda \not = 1$. Suppose that $\lambda _0\in \Gamma$, and $R(\lambda ,S)g_\R$ has an analytic extension to a neighborhood of $\lambda_0$. We can take this extension for the complex function $R(\lambda ,S)g$, so the complex function $R(\lambda ,\bar S)\bar g$ also has an analytic extension to a neighborhood of $\lambda_0$.
Therefore, $\lambda_0\not \in \sigma (g)$.

Conversely, if $\lambda_0\in \Gamma$ and 
$ \lambda_0\not\in \sigma (g)$. By the definition, $R(\lambda , \bar S)\bar g$ has an analytic extension to a neighborhood of $\lambda _0$. Note that the projection map $p: BUC(\R^+,\X) \to \Y : =BUC(\R^+,\X)/C_0(\X)$ preserves norm if we restrict $p$ to $AP(\R^+,\X)$. Therefore we can identify $R(\lambda ,\bar S)\bar g$ with $R(\lambda ,S)g$ since all these functions are almost periodic. Hence, we can identify $R(\lambda , \bar S)\bar g$ with $R(\lambda , S)g_\R$, and thus, 
we can use the analytic extension of $R(\lambda ,\bar S)\bar g$ to be an analytic extension of $R(\lambda ,S)g_\R$.
 or $\lambda_0\not\in \sigma (g_\R$. The proposition is proved.
\end{proof}

\subsection{Periodic evolutionary processes}
\begin{definition}
Let $(U(t,s))_{t\ge s}$ be a two-parameter family of bounded operators in a Banach space $\X$. Then, it is called an evolutionary process if
\begin{enumerate}
\item $U(t,t)=I$\ \ for all \ $t\in {\R}$,
\item $U(t,s)U(s,r)=U(t,r)$\ \ for all \ \ $t\ge s\ge r$,
\item The map \ \ $(t,s)\mapsto U(t,s)x$\ \ is continuous for every fixed \ \ $x
\in {\X}$,
\item $\| U(t,s)\| < Ne^{\omega (t-s)} $ for 
some positive \ $N, \omega $ \ independent of $t \geq s$ .
\end{enumerate}
\end{definition}
An evolutionary process is called {\it 1-periodic} if
\begin{equation*}
U(t+1 ,s+1 )=U(t,s), \ \mbox{for all} \ t \ge s .
\end{equation*}

Recall that for a given 1-periodic evolutionary process
$(U(t,s))_{t\ge s}$ the following operator
\begin{equation*}
P(t):= U(t,t-1), t \in {\R}
\end{equation*}
is called {\it monodromy operator} (or sometime {\it period map,
Poincar\'e map}). Thus we have a family of monodromy operators. We
will denote $P:= P(0)$. The nonzero eigenvalues of $P(t)$  are
called {\it characteristic multipliers}. An important property of
monodromy operators is stated in the following lemma whose proof
can be found or modified from similar results in \cite{hen,hinnaiminshi}.
\begin{lemma}\label{lem 5.2}
Under the notation as above the following assertions hold:
\begin{enumerate}
\item $P(t+1) = P(t)$ for all $t$; characteristic multipliers are
independent of time, i.e. the nonzero eigenvalues of $P(t)$ coincide
with those of $P$, \item $\sigma (P(t)) \backslash \{0\}=\sigma (P)
\backslash \{0\}$, i.e., it is independent of $t$, \item If $\lambda
\in \rho (P)$, then the resolvent $R(\lambda , P(t))$ is strongly
continuous, \item If ${\cal P}$ denotes the operator of
multiplication by $P(t)$ in any one of the function spaces $BUC(\R^+,\X)$ or $AP(\R^+, \X)$, then
\begin{equation}\label{5.3}
\sigma ({\cal P}) \backslash \{0\} \subset \sigma (P)\backslash
\{0\}.
\end{equation}
\end{enumerate}
\end{lemma}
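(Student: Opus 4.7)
The plan is to handle the four items in order, each leaning on the previous ones. For (i), the 1-periodicity of $P(\cdot)$ follows at once from $U(t+1,s+1)=U(t,s)$: one computes $P(t+1)=U(t+1,t)=U(t,t-1)=P(t)$. The claim that the nonzero eigenvalues of $P(t)$ coincide with those of $P$ will be absorbed into (ii) below, since spectra subsume point spectra.

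For (ii), using (i) I may restrict to $t\in[0,1]$. The central observation is a factorization: chaining the composition axiom with the periodicity identity $U(0,t-1)=U(1,t)$ yields
\[
P(t)=U(t,t-1)=U(t,0)\,U(1,t),\qquad P=U(1,0)=U(1,t)\,U(t,0).
\]
Setting $A:=U(t,0)$ and $B:=U(1,t)$, this reads $P(t)=AB$ and $P=BA$, so the classical identity $\sigma(AB)\setminus\{0\}=\sigma(BA)\setminus\{0\}$ yields (ii) immediately.

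For (iii), the same factorization combined with the standard resolvent formula
\[
R(\lambda,AB)=\lambda^{-1}I+\lambda^{-1}A\,R(\lambda,BA)\,B,\qquad \lambda\in\rho(BA)\setminus\{0\},
\]
produces the closed-form expression $R(\lambda,P(t))=\lambda^{-1}I+\lambda^{-1}U(t,0)R(\lambda,P)U(1,t)$. Since $R(\lambda,P)$ is a fixed bounded operator and both $t\mapsto U(t,0)x$ and $t\mapsto U(1,t)x$ are continuous on $[0,1]$ by the continuity axiom of the process, strong continuity of $R(\lambda,P(\cdot))$ on $[0,1]$, and hence on $\R$ by 1-periodicity, follows at once.

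For (iv) I would introduce the natural candidate $(\mathcal{R}f)(t):=R(\lambda,P(t))f(t)$. The uniform bound $M:=\sup_{t\in\R}\|R(\lambda,P(t))\|<\infty$ comes from (iii) combined with periodicity (via Banach--Steinhaus on $[0,1]$), and a pointwise computation gives $(\lambda I-\mathcal{P})\mathcal{R}=\mathcal{R}(\lambda I-\mathcal{P})=I$. The main obstacle here is checking that $\mathcal{R}$ sends the function space back into itself: continuity of $\mathcal{R}f$ at each point is automatic from (iii), but upgrading to uniform continuity on $\R^+$ (respectively almost periodicity) requires care, since $R(\lambda,P(\cdot))$ is only strongly — not norm — continuous in $t$. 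I would use the standard splitting
\[
\mathcal{R}f(t)-\mathcal{R}f(s)=R(\lambda,P(t))\bigl(f(t)-f(s)\bigr)+\bigl(R(\lambda,P(t))-R(\lambda,P(s))\bigr)f(s),
\]
controlling the first term by $M$ and the uniform continuity of $f$, and handling the delicate second term by exploiting the 1-periodicity of $R(\lambda,P(\cdot))$ to reduce $t,s$ to a compact interval and then invoking equicontinuity of the strongly continuous family on (pre)compact subsets of $\X$ traversed by $f$. For $AP(\R^+,\X)$ this is immediate via the approximation theorem, since trigonometric polynomials have values in finite-dimensional subspaces; for $BUC(\R^+,\X)$ one argues by density of functions with precompact range, extending by continuity of $\mathcal{R}$. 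Once $\mathcal{R}$ is confirmed to be a bounded operator on the function space, it equals $R(\lambda,\mathcal{P})$, and therefore $\lambda\in\rho(\mathcal{P})$, establishing (\ref{5.3}).
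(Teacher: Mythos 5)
The paper does not actually prove this lemma; it only cites \cite{hen,hinnaiminshi}, so your write-up is being measured against the standard arguments in those references rather than against anything in the text. Your treatment of (i)--(iii) is correct and is exactly the classical route: the factorization $P(t)=U(t,0)U(1,t)$, $P=U(1,t)U(t,0)$ for $t\in[0,1]$, the identity $\sigma(AB)\setminus\{0\}=\sigma(BA)\setminus\{0\}$, and the resolvent formula $R(\lambda,AB)=\lambda^{-1}I+\lambda^{-1}AR(\lambda,BA)B$ (which, as you implicitly note, forces $\lambda\neq 0$ throughout --- harmless, since the statement only concerns the nonzero spectrum). The strong-continuity argument in (iii), splitting off the strongly continuous factors against the fixed bounded operator $R(\lambda,P)$, is also fine.

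The genuine gap is in (iv), in the $BUC(\R^+,\X)$ case, and it sits exactly where you flagged the difficulty. Your plan is to verify that $\mathcal{R}f(t):=R(\lambda,P(t))f(t)$ lands back in the function space by first treating $f$ with precompact range (where equicontinuity of the strongly continuous family $R(\lambda,P(\cdot))$ on compact subsets of $\X$ does the job) and then ``arguing by density of functions with precompact range, extending by continuity of $\mathcal{R}$.'' But functions with precompact range are \emph{not} dense in $BUC(\R^+,\X)$ when $\X$ is infinite dimensional: a uniform limit of functions with totally bounded range again has totally bounded range, so this class is closed and proper (consider a function running through an orthonormal sequence of $\ell^2$). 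Consequently the extension step has nothing to extend to, and the well-definedness of $\mathcal{R}$ (indeed, even of $\mathcal{P}$ itself) on all of $BUC(\R^+,\X)$ does not follow from strong continuity of $P(\cdot)$ alone: the term $\bigl(R(\lambda,P(t))-R(\lambda,P(s))\bigr)f(s)$ in your splitting cannot be controlled uniformly over a non-precompact range. Your argument for $AP(\R^+,\X)$ is sound --- almost periodic functions do have precompact range (or one uses trigonometric polynomials plus closedness of $AP$ under uniform limits), and this is the setting in which the cited references actually carry out the proof. To close the $BUC$ case you would need either an additional hypothesis (norm continuity of $t\mapsto P(t)$), or to restrict the statement to the subspace of $BUC(\R^+,\X)$ consisting of functions with relatively compact range, or a genuinely different argument for the inclusion \eqref{5.3} that does not pass through exhibiting $R(\lambda,\mathcal{P})$ as the multiplication operator $\mathcal{R}$ on all of $BUC(\R^+,\X)$.
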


\section{Existence of almost periodic solutions of Eq.(\ref{FDE})}
\begin{definition}
A continuous function $u(\cdot )\in BUC(\R^+,\X)$ is said to be a mild solution on ${\R^+}$ of 
Eq.(\ref{FDE}) with initial $\phi \in C_r$
if $u_0 =\phi$ and for all $t > s \ge 0 $
\begin{equation}\label{mild solution}
u(t)=T(t-s)u(s)+\int^t_sT(t-\xi )[F(\xi )u_\xi +f(\xi )]d\xi 
\end{equation}
\end{definition}
It is well known that if $A$ generates a $C_0$-semigroup, and $F(t ): C_r \to \X$ for each $t$ and depends continuously and periodically on $t$  with period $1$, then the homogeneous equation
\begin{equation*}
\frac{du}{dt}=Au(t)+F(t)u_t,
\end{equation*}
will generates a $1$-periodic evolutionary process, denoted by $(U(t,s)_{t\ge s})$ in the phase space $C_r$.  In fact,
\begin{equation*}
U(t,s): C_r \ni \phi \mapsto u_t \in C_r ,
\end{equation*}
where $u$ is the solution of the equation
\begin{eqnarray*}
u(\tau ) &=& T(\tau -s) \phi (0) +\int^\tau _s T(\tau -\xi )F(\xi )u_\xi d\xi , \ \tau \ge s, \\
u_s &=& \phi .
\end{eqnarray*}
We introduce a function 
$\Gamma^n$ defined by 
$$\Gamma^n(\theta)=\left\{
\begin{array}{cc}
(n\theta+1)I,&\qquad -1/n\leq \theta\leq 0\\
&\\
0,&\theta<-1/n,
\end{array}
\right.
$$
where $n$ is any positive integer and $I$ is the identity operator on ${\mathbb X}$.  An explanation of the function $\Gamma^nf(s)$ as an element of $C_r$ is in order. By our definition,
$$
\Gamma^nf(s) : [-r,0] \ni \theta \mapsto \Gamma^n(\theta )f(s) =   \left\{
\begin{array}{cc}
(n\theta+1)f(s),&\qquad -1/n\leq \theta\leq 0\\
&\\
0,&\theta<-1/n,
\end{array}
\right .
$$

Since the evolutionary process $(U(t,s))_{t\ge s}$ is strongly continuous,
the ${C_r}$-valued function $U(t,s)\Gamma^nf(s)$ is 
continuous in $s\in [-r, t]$ whenever $f\in {\rm BUC}({\mathbb R^+},{\mathbb X}) .$ 

\medskip
The following theorem, whose proof could be found in \cite{murnaimin2}, is a variation of constant formula for solutions of (\ref{FDE}) in the phase space $C_r$:
\begin{theorem}\label{the vcf}
The segment $u_t(s,\phi;f)$ of solution $u(\cdot,s,\phi,f)$ of (\ref{FDE}) satisfies the following relation in $C_r$:
\begin{equation}\label{vcf}
u_t(s,\phi;f)=U(t,s)\phi
+\lim_{n\to \infty}\int_{s}^tU(t,\xi )\Gamma^nf(\xi )d\xi , \qquad t\ge s\ge 0 .
\end{equation}
Moreover, the above limit exists uniformly for bounded $|t-s |$.
\end{theorem}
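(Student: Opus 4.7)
The plan is to derive a mild-type integral equation for the right-hand side of (\ref{vcf}) and invoke uniqueness of mild solutions to identify it with $u_t$. Write $J^n(t):=\int_s^t U(t,\xi)\Gamma^n f(\xi)\,d\xi\in C_r$; the goal is to show $J^n(t)$ converges in $C_r$ uniformly on bounded $|t-s|$ to a segment $j_t$, and that $v(\tau):=[U(\tau,s)\phi](0)+j(\tau)$ is the mild solution of (\ref{FDE}) starting from $\phi$ at $s$.

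The key computation is the evaluation of $[U(t,\xi)\Gamma^n f(\xi)](\theta)$ for $\theta\in[-r,0]$. For $t+\theta\ge\xi$, the mild equation for the homogeneous process combined with $\Gamma^n f(\xi)(0)=f(\xi)$ yields
\begin{equation*}
[U(t,\xi)\Gamma^n f(\xi)](\theta)=T(t+\theta-\xi)f(\xi)+\int_\xi^{t+\theta}T(t+\theta-\eta)F(\eta)U(\eta,\xi)\Gamma^n f(\xi)\,d\eta,
\end{equation*}
whereas for $t+\theta<\xi$ the value equals $\Gamma^n(t+\theta-\xi)f(\xi)$, which is supported on a $\xi$-interval of length at most $1/n$ and bounded by $\|f\|_\infty$, so its integrated contribution to $J^n(t)(\theta)$ is $O(1/n)$ uniformly in $\theta$. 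Integrating in $\xi$ and applying Fubini gives
\begin{equation*}
J^n(t)(\theta)=\int_s^{t+\theta}T(t+\theta-\xi)f(\xi)\,d\xi+\int_s^{t+\theta}T(t+\theta-\eta)F(\eta)J^n(\eta)\,d\eta+O(1/n).
\end{equation*}

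Setting $j^n(\tau):=J^n(\tau)(0)$ for $\tau\ge s$ and noting $J^n(\eta)=j^n_\eta+O(1/n)$ in $C_r$ by the same analysis, the displayed identity reduces to an approximate mild equation for $j^n$. The exponential bound on $U$ together with the uniform bound $\|\Gamma^n f(\xi)\|_{C_r}\le\|f\|_\infty$ give a uniform bound on $\|j^n(\tau)\|$ on bounded intervals, and applying Gronwall's inequality to the difference of the identities for indices $n,m$ shows $\{j^n\}$ is uniformly Cauchy on bounded intervals. Passing to the limit produces a function $j$ solving
\begin{equation*}
j(\tau)=\int_s^\tau T(\tau-\xi)f(\xi)\,d\xi+\int_s^\tau T(\tau-\eta)F(\eta)j_\eta\,d\eta,\qquad \tau\ge s,
\end{equation*}
with $J^n(t)\to j_t$ in $C_r$ uniformly on bounded $|t-s|$.

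To conclude, define $v(\tau):=[U(\tau,s)\phi](0)+j(\tau)$ for $\tau\ge s$, extended by $\phi(\tau-s)$ on $[s-r,s]$. Using the identity $[U(\tau,s)\phi](\theta)=[U(\tau+\theta,s)\phi](0)$ for $\tau+\theta\ge s$ one checks $v_\tau=U(\tau,s)\phi+j_\tau$ in $C_r$, and combining the mild equation for $[U(\tau,s)\phi](0)$ with the displayed identity for $j$ yields $v(\tau)=T(\tau-s)\phi(0)+\int_s^\tau T(\tau-\xi)[F(\xi)v_\xi+f(\xi)]\,d\xi$; thus $v$ coincides with the mild solution $u$, so $u_t=v_t=U(t,s)\phi+j_t=U(t,s)\phi+\lim_n J^n(t)$. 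The main obstacle is controlling the nested limits uniformly in $\theta$ so that the approximation $J^n(\eta)\approx j^n_\eta$ can be fed back inside the integral equation and Gronwall closes the loop; once that is done, the uniform convergence on bounded $|t-s|$ is inherited from the estimates of Step 2.
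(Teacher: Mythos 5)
Your argument is correct in outline, and it is essentially the standard proof of this formula; the paper itself offers no proof here, deferring entirely to \cite{murnaimin2}, where the argument runs along the same lines you give: evaluate $[U(t,\xi)\Gamma^n f(\xi)](\theta)$ for $t+\theta\ge\xi$ via the defining integral equation of the homogeneous process, bound the $\xi>t+\theta$ contribution by $\|f\|_\infty/n$ using the $1/n$-support of $\Gamma^n$, interchange the order of integration, and close the loop with Gronwall and uniqueness of mild solutions. The only points worth making explicit are the uniform bound $\sup_\eta\|F(\eta)\|<\infty$ (available from the continuity and periodicity of $F$) needed both for the Fubini step and for Gronwall, and the uniqueness of mild solutions of (\ref{FDE}) with a given initial segment, which is itself a routine Gronwall estimate on the integral equation (\ref{mild solution}).
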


Using the variation of constant formula (\ref{vcf}) we will prove the following estimate of spectrum that will be the key tool for our study in this paper.
\begin{lemma}\label{lem 4}
Let ${\cal M}=AP^+_{\Lambda}(\X) \oplus C_0(\X)$ be an admissible subspace of $BUC(\R^+,\X)$. Furthermore, let $\Lambda$ be of the form
\begin{equation*}
\Lambda = \{ \lambda \in \R | \ e^{i\lambda } \in \Xi \},
\end{equation*}
where $\Xi $ is a closed subset of the unit circle $ \Gamma$.

Then, under the above notations, for each function $u\in BUC(\R^+,\X)$ as a mild solution of Eq. (\ref{FDE}) on $\R^+$, the following estimate is valid:
\begin{eqnarray}\label{est 3}
\sigma_{\cal M} (u) &\subset & \sigma _\Gamma (P)\cup \sigma _{\cal M}(f).
\end{eqnarray}
\end{lemma}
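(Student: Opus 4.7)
The plan is to lift the equation to the $C_r$-valued setting via the segment process, turn it (through the variation of constants formula) into a ``monodromy-driven'' difference equation modulo ${\cal M}$, and extract the circular spectral estimate by a resolvent identity, before descending back to $\X$ by evaluation at $0$.

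First, I would set $v(t):=u_t\in C_r$; the uniform continuity and boundedness of $u$ on $\R^+$ put $v$ in $BUC(\R^+,C_r)$. Applying Theorem~\ref{the vcf} on the interval $[t,t+1]$ with initial datum $u_t$ and invoking the $1$-periodicity $U(t+1,t)=P(t+1)=P(t)$, I obtain
\begin{equation*}
v(t+1)=P(t)v(t)+\omega(t),\qquad \omega(t):=\lim_{n\to\infty}\int_t^{t+1}U(t+1,\xi)\Gamma^n f(\xi)\,d\xi.
\end{equation*}
Letting ${\cal P}$ denote the multiplication operator $[{\cal P}w](t):=P(t)w(t)$ on $BUC(\R^+,C_r)$, this reads $S(1)v={\cal P}v+\omega$, and $S(1){\cal P}={\cal P}S(1)$ because $P$ is $1$-periodic.

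Next, I would verify that ${\cal M}_{C_r}:=AP^+_\Lambda(C_r)\oplus C_0(C_r)$ is left invariant by ${\cal P}$ and, for $\lambda\notin\sigma(P)$, by $R(\lambda,{\cal P})$ (multiplication by the strongly continuous $1$-periodic $R(\lambda,P(\cdot))$, cf.\ Lemma~\ref{lem 5.2}(iii)). The $C_0$-invariance is immediate from the uniform bound; the $AP^+_\Lambda$-invariance uses the key structural hypothesis $\Lambda=\{\lambda:e^{i\lambda}\in\Xi\}$, which forces $\Lambda+2\pi\Z=\Lambda$, so that the Bohr spectrum of a product of a $1$-periodic continuous factor (Bohr spectrum in $2\pi\Z$) with an element of $AP_\Lambda$ stays in $2\pi\Z+\Lambda=\Lambda$. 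In the quotient $\bar Y:=BUC(\R^+,C_r)/{\cal M}_{C_r}$ I thus obtain $\bar S\bar v=\bar{\cal P}\bar v+\bar\omega$ with $\bar S\bar{\cal P}=\bar{\cal P}\bar S$ and $\sigma(\bar{\cal P})\setminus\{0\}\subset\sigma(P)\setminus\{0\}$. A short manipulation together with commutativity then yields the resolvent identity
\begin{equation*}
R(\lambda,\bar S)\bar v=R(\lambda,\bar{\cal P})\bar v+R(\lambda,\bar{\cal P})\,R(\lambda,\bar S)\,\bar\omega,\qquad |\lambda|\ne 1,\ \lambda\notin\sigma(\bar{\cal P}).
\end{equation*}

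To conclude, I would establish two transfer statements. The map $Q:f\mapsto\omega$ is a bounded linear operator $BUC(\R^+,\X)\to BUC(\R^+,C_r)$ that commutes with $S(1)$ (again by $1$-periodicity of $U$), sends $C_0(\X)$ into $C_0(C_r)$ (uniform bound of $U$ on unit intervals), and sends $AP^+_\Lambda(\X)$ into $AP^+_\Lambda(C_r)$ (for $f(t)=e^{i\lambda t}a$ with $\lambda\in\Lambda$ one computes $\omega(t)=e^{i\lambda t}\psi_\lambda(t)$ with $\psi_\lambda$ $1$-periodic, giving Bohr spectrum in $\lambda+2\pi\Z\subset\Lambda$; then extend by density of trigonometric polynomials). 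Arguing exactly as in Lemma~\ref{lem 1}(ii), this yields $\sigma_{{\cal M}_{C_r}}(\omega)\subset\sigma_{\cal M}(f)$. Similarly, evaluation at $0$, ${\cal J}:v(\cdot)\mapsto v(\cdot)(0)$, is bounded, sends ${\cal M}_{C_r}$ into ${\cal M}$, commutes with $S(1)$, and satisfies ${\cal J}v=u$, hence $\sigma_{\cal M}(u)\subset\sigma_{{\cal M}_{C_r}}(v)$. Now for $\xi_0\in\Gamma\setminus(\sigma(P)\cup\sigma_{\cal M}(f))$, both $R(\lambda,\bar{\cal P})$ and $R(\lambda,\bar S)\bar\omega$ admit analytic extensions to a neighborhood of $\xi_0$, so the resolvent identity shows the same for $R(\lambda,\bar S)\bar v$; hence $\xi_0\notin\sigma_{{\cal M}_{C_r}}(v)$ and therefore $\xi_0\notin\sigma_{\cal M}(u)$, proving the inclusion. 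The principal technical obstacle is the invariance verifications for ${\cal P}$ and for $Q$, as both isolate the quantitative use of the hypothesis $\Lambda+2\pi\Z=\Lambda$ on which the whole circular (as opposed to Beurling) framework depends.
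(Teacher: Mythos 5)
Your proof is correct and follows essentially the same route as the paper: lift to the segment function $u_\cdot\in BUC(\R^+,C_r)$, apply the variation of constants formula over unit intervals to get $S(1)u_\cdot={\cal P}u_\cdot+Gf$, pass to the quotient, and use the commuting resolvent identity $R(\lambda,\bar S)\bar u_\cdot=R(\lambda,\bar{\cal P})\bar u_\cdot+R(\lambda,\bar{\cal P})R(\lambda,\bar S)\overline{Gf}$ together with $\sigma_{\tilde{\cal M}}(Gf)\subset\sigma_{\cal M}(f)$. Your explicit verification that $\Lambda+2\pi\Z=\Lambda$ makes ${\cal M}_{C_r}$ invariant under multiplication by $1$-periodic factors, and your treatment of the operator $f\mapsto\omega$ and of the evaluation map back to $\X$, supply details the paper leaves implicit.
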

\begin{proof}
By the formula (\ref{vcf}), for $t\ge 0$
\begin{eqnarray}\label{est-1}
u_{t+1} &=& U(t+1,t) u_{t} + \lim_{n\to \infty} \int^{t+1}_{t} U(t+1,s)\Gamma^n f(s)ds,
\end{eqnarray}
and the limit exists uniformly for all bounded $t$. Define the operator $G_n$ as below
\begin{equation*}
[G_n g ] (t):= \int^{t+1}_{t} U(t+1,s)\Gamma^n g(s)ds, \ g \in {\cal M} , t\ge 1.
\end{equation*}
To proceed, let us define an admissible subspace $\tilde{\cal M}$ in $BUC(\R^+, C_r)$ as
$$
\tilde{\cal M} := AP^+_{\Lambda}(C_r) \oplus C_0(C_r).
$$
Then, we consider the space $\tilde{\Y} := BUC(\R^+,C_r) /\tilde{\cal M}.$

\medskip
Note that $G_n$ commutes with $S$, so $\sigma_{\cal M} (G_nf) \subset \sigma _{\cal M}(f)$. Next, if we set
$$
[Gf](t) := \lim_{n\to \infty} \int^{t+1}_{t} U(t+1,s)\Gamma^n f(s)ds
,$$
then due to the uniformity of the limit $\sigma_{\cal M}(Gf) \subset \sigma_{\cal M}(f)$ as well.

At this point we can easily see that for each $\lambda \in \C$ such that $|\lambda |\not = 1$
\begin{eqnarray*}\label{101}
[(\lambda -S) u_{\cdot} ](t) &=& \lambda u_t - U(t+1,t)u_t + \lim_{n\to \infty} G_nf\\
&=& \lambda u_t - P(t)u_t + Gf
\end{eqnarray*}
Let us consider the operator of multiplication by $P(t)$, denoted by ${\cal P}$. The periodicity of the evolution process $(U(t,s))_{t\ge s}$ yields that $P(t)$ is 1-periodic, so it commutes with the translation $S$. Obviously
\begin{equation*}\label{resolvent}
(\lambda -\bar S ) \bar u_{\cdot} = (\lambda -\tilde {\cal P})\bar u_{\cdot} + Gf.
\end{equation*}
This means, 
\begin{equation*}
R(\lambda , \bar S)\bar u_\cdot  =R(\lambda , {\cal P})\bar u_\cdot  -R(\lambda ,{\cal P})R(\lambda , \bar S)\bar{Gf}),
\end{equation*}
whenever $\lambda$ is in a small neighborhood of $\xi_0 \not\in (\sigma_{\Gamma}(P)\cup \sigma _{\cal M}(f))$. This shows that $R(\lambda , \bar S)\bar u_\cdot $ has an analytic extension in a neighborhood of $\xi_0$, and so does $R(\lambda ,\bar S)\bar u$. This proves  (\ref{est 3}), completing the proof of the lemma..
\end{proof}

The following result is an analog of the Katznelson-Tsafriri Theorem.
\begin{theorem}\label{the 1}
Let $\sigma _\Gamma (P) \subset \{ 1\}$, and $u\in BUC(\R^+,\X)$ be a mild solution of Eq. (\ref{FDE}) on the half line $\R^+$, and let $f\in C_0(\X)$. Then,
\begin{equation*}
\lim_{t\to \infty} ( u(t+1)-u(t)) =0.
\end{equation*}
\end{theorem}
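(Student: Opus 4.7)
The plan is to apply Theorem \ref{the tec} with the admissible subspace ${\cal M} = C_0(\X)$ to the difference $v(\cdot):=u(\cdot +1)-u(\cdot )$, thereby concluding $v\in C_0(\X)$, which is exactly the claim.

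First, I would localize the circular spectrum of $u$. Since $f\in C_0(\X)$, Lemma \ref{lem 1}(i) gives $\sigma _{C_0}(f)=\emptyset$. Applying Lemma \ref{lem 4} with $\Xi=\{1\}$ (so $\Lambda =2\pi \Z$ and ${\cal M}=C_0(\X)$), the hypothesis $\sigma _\Gamma (P)\subset \{1\}$ yields
\begin{equation*}
\sigma _{C_0}(u) \subset \sigma _\Gamma (P) \cup \sigma _{C_0}(f) \subset \{1\}.
\end{equation*}
Next I would pass to $v=(S(1)-I)u$. The operator $Q:=S(1)-I$ commutes with $S(1)$ and maps $C_0(\X)$ into $C_0(\X)$, so Lemma \ref{lem 1}(ii) gives $\sigma _{C_0}(v)\subset \sigma _{C_0}(u) \subset \{1\}$. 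In particular, $\sigma_{C_0}(v)$ is countable (at most one point).

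The key step is to verify the ergodic condition of Theorem \ref{the tec} at the sole possible spectral point $\xi _0=1$. Writing $\bar v=(\bar S-I)\bar u$ in $\Y=BUC(\R^+,\X)/C_0(\X)$, and using the algebraic identity
\begin{equation*}
(\bar S-I)R(\lambda ,\bar S) = -I + (\lambda -1)R(\lambda ,\bar S),
\end{equation*}
one gets
\begin{equation*}
(\lambda -1)R(\lambda ,\bar S)\bar v = -(\lambda -1)\bar u + (\lambda -1)^2 R(\lambda ,\bar S)\bar u.
\end{equation*}
Along the ray $\lambda =s\cdot 1$ with $s\downarrow 1$, we have $|\lambda -1|=s-1=|1-|\lambda ||$, and Lemma \ref{lem 1-0} yields
\begin{equation*}
\| (\lambda -1)^2 R(\lambda ,\bar S)\bar u\| \le (s-1)^2 \cdot \frac{\| \bar u\| }{s-1} = (s-1)\| \bar u\| \longrightarrow 0 ,
\end{equation*}
while clearly $(\lambda -1)\bar u \to 0$. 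Hence $\lim_{\lambda \downarrow 1}(\lambda -1)R(\lambda ,\bar S)\bar v=0$.

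The remaining step is routine: Theorem \ref{the tec}, applied to $v$ with ${\cal M}=C_0(\X)$, gives $v\in C_0(\X)$, i.e.\ $\lim_{t\to \infty}(u(t+1)-u(t))=0$. The only delicate point is the ergodic computation above; everything else is a direct invocation of the machinery assembled in Section \ref{section 2}, in particular the spectral inclusion from Lemma \ref{lem 4} (which requires checking that $C_0(\X)$ fits the form ${\cal M}=AP^+_\Lambda (\X)\oplus C_0(\X)$ with $\Xi=\{1\}$, a formal verification).
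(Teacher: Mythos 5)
Your argument is correct and follows essentially the same route as the paper: both localize $\sigma_{C_0}(u)$ to $\{1\}$ via Lemma \ref{lem 4}, pass to $v=Su-u$, and use the resolvent identity $R(\lambda ,\bar S)\bar v=(\lambda -1)R(\lambda ,\bar S)\bar u-\bar u$ to remove the singularity at $1$. The only methodological difference is the last step: the paper invokes Lemma \ref{lem 2} to say that $1$ is at worst a first-order pole of $R(\lambda ,\bar S)\bar u$, so $h(\lambda )$ is entire and $\sigma (v)=\emptyset$, while you verify the ergodic limit quantitatively via the estimate of Lemma \ref{lem 1-0} and conclude through Theorem \ref{the tec}; both are sound and rest on the same machinery. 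One small correction: to realize ${\cal M}=C_0(\X)$ as an admissible subspace in Lemma \ref{lem 4} you should take $\Xi =\emptyset$ (hence $\Lambda =\emptyset$ and $AP^+_{\Lambda}(\X)=\{0\}$); your choice $\Xi =\{1\}$ gives $\Lambda =2\pi \Z$ and ${\cal M}=AP^+_{2\pi \Z}(\X)\oplus C_0(\X)$, the asymptotically $1$-periodic functions, which is strictly larger than $C_0(\X)$.
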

\begin{proof}
By Lemma \ref{lem 4}
\begin{eqnarray*}
\sigma (u) &\subset &\{1\} ,\\
\sigma (Su) &\subset& \{1 \} .
\end{eqnarray*}
Therefore,
\begin{equation*}
\sigma (Su-u) \subset \{ 1\} .
\end{equation*}
Consider the transform
\begin{equation*}
h(\lambda ):= R(\lambda , \bar S) (\bar S \bar u-\bar u).
\end{equation*}
It is analytic everywhere, with only possible exception at $\lambda =1$.

\medskip
On the other hand, using the identity $R(\lambda ,\bar S)\bar
S\bar u = \lambda R(\lambda , \bar S)\bar u - \bar x$ gives
\begin{eqnarray*}
 R(\lambda ,\bar S)(\bar S \bar u -\bar u ) &=& R(\lambda ,\bar S)\bar S \bar u-R(\lambda ,\bar S) \bar u \\
 &=& \lambda R(\lambda , \bar S)\bar u - \bar u -R(\lambda ,\bar S) \bar u    \\
 &=& (\lambda -1 )R(\lambda ,\bar S)  \bar u -\bar u .
\end{eqnarray*}
By Lemma \ref{lem 2} the function $h(\lambda )$ must be extendable analytically to the whole complex plane since $1$ is the only possible pole of first order of the function $R(\lambda ,\bar S)\bar u$. Therefore, by the definition of the spectrum of $ (Su-u)$, $\sigma (Su-u)=\emptyset$. By Lemma \ref{lem 1} $(Su-u ) \in C_0(\X)$. The theorem is proved.
\end{proof}

\begin{theorem}\label{the 2}
Let $u\in BUC(\R^+,\X)$ be a mild solution of Eq. (\ref{FDE}). Assume further that
\begin{enumerate}
\item $f\in AAP(\R^+,\X)$ with 
$$f=f_{AP}+f_0,$$
where $f_{AP}\in AP(\R^+,\X)$, and $f_0\in C_0(\X)$. 
\item
$\sigma_\Gamma (P)$ is countable and for every $x\in\X$ and $\xi_0\in \sigma _\Gamma (P)$
\begin{eqnarray}
\lim_{\lambda \downarrow \xi_0} (\lambda -\xi_0 )R(\lambda ,
P) x =0 .
\end{eqnarray}
\end{enumerate}
Then, the following assertions are true:
\begin{enumerate}
\item $u\in AAP(\R^+,\X)$, that is,
\begin{eqnarray}
u=u_{AP}+u_0,
\end{eqnarray}
where $u_{AP}\in AP(\R^+,\X)$ and
$u_0\in C_0(\X)$,
\item 
\begin{equation}\label{sp}
e^{\overline{\sigma_b(u_{AP})}}) \subset \sigma_\Gamma (P) \cup \overline{ e^{\overline{ \sigma_b (f_{AP}}}} )   .
\end{equation}
\end{enumerate}
\end{theorem}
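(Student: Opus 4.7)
The plan is to establish (i) by applying the spectral estimate of Lemma~\ref{lem 4} and the countable-spectrum criterion of Theorem~\ref{the tec} to the admissible subspace $AAP(\R^+,\X)$, and then to obtain the spectral inclusion (ii) by iterating the same argument with a smaller admissible subspace encoding the Bohr spectrum of $f_{AP}$.

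For (i), I take $\mathcal{M}_0 := AAP(\R^+,\X) = AP^+_{\R}(\X) \oplus C_0(\X)$, which corresponds to $\Xi = \Gamma$. Since $f = f_{AP} + f_0 \in \mathcal{M}_0$, Lemma~\ref{lem 1}(i) gives $\sigma_{\mathcal{M}_0}(f) = \emptyset$, and Lemma~\ref{lem 4} then yields $\sigma_{\mathcal{M}_0}(u) \subset \sigma_\Gamma(P)$, a countable set. To conclude $u \in \mathcal{M}_0$ via Theorem~\ref{the tec}, I must verify $\lim_{\lambda \downarrow \xi_0}(\lambda - \xi_0)R(\lambda,\bar S)\bar u = 0$ for every $\xi_0 \in \sigma_{\mathcal{M}_0}(u)$. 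Passing to the $C_r$-valued quotient $\tilde{\Y} = BUC(\R^+,C_r)/\tilde{\mathcal{M}}_0$ used in the proof of Lemma~\ref{lem 4}, the integral term $Gf$ lies in $\tilde{\mathcal{M}}_0$, so $\bar{Gf} = 0$ in $\tilde{\Y}$ and the resolvent identity derived there collapses to
\begin{equation*}
R(\lambda, \bar S)\bar u_{\cdot} = R(\lambda, \tilde{\mathcal{P}})\bar u_{\cdot}.
\end{equation*}
Since $R(\lambda,\tilde{\mathcal{P}})$ acts pointwise as multiplication by $R(\lambda, P(t))$ (valid by Lemma~\ref{lem 5.2}(ii)--(iv)) and the nonzero spectra of $P(t)$ and $P$ coincide, hypothesis (ii) of the theorem gives the pointwise limit $(\lambda - \xi_0)R(\lambda, P(t))u_t \to 0$ for each fixed $t \ge 0$.

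Promoting this pointwise convergence to convergence in the quotient norm of $\tilde{\Y}$ is the step I expect to be the main obstacle. To handle it I would use Lemma~\ref{lem 2} to force $\xi_0$ to be at worst a simple pole of $\lambda \mapsto R(\lambda,P)$, which yields a uniform bound on $\|(\lambda - \xi_0)R(\lambda, P(t))\|$ in $t$ via the 1-periodicity of $P(\cdot)$, and then combine this bound with the boundedness of $\{u_t : t \ge 0\} \subset C_r$ and a routine equicontinuity argument modulo $\tilde{\mathcal{M}}_0$. Once this is done Theorem~\ref{the tec} produces $u \in \mathcal{M}_0$, i.e., $u = u_{AP} + u_0$ with $u_{AP} \in AP(\R^+,\X)$ and $u_0 \in C_0(\X)$, proving (i).

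For (ii), I rerun the preceding argument with a tailored admissible subspace. Set $\Xi := \sigma_\Gamma(P) \cup \overline{e^{i\overline{\sigma_b(f_{AP})}}}$, a closed subset of $\Gamma$, define $\Lambda := \{\lambda \in \R : e^{i\lambda} \in \Xi\}$, and take $\mathcal{M}' := AP^+_\Lambda(\X) \oplus C_0(\X)$. By Proposition~\ref{pro} together with identity (\ref{bohr}), $\sigma(f_{AP}) = \overline{e^{i\overline{\sigma_b(f_{AP})}}} \subset \Xi$, so $\overline{\sigma_b(f_{AP})} \subset \Lambda$ and hence $f \in \mathcal{M}'$. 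Lemma~\ref{lem 4} gives $\sigma_{\mathcal{M}'}(u) \subset \sigma_\Gamma(P)$, and the limit condition of Theorem~\ref{the tec} is verified exactly as in the previous paragraph. Thus $u \in \mathcal{M}'$, which forces $\sigma_b(u_{AP}) \subset \Lambda$. Applying Proposition~\ref{pro} to $u_{AP}$ and taking closures yields $\overline{e^{i\overline{\sigma_b(u_{AP})}}} \subset \overline{e^{i\Lambda}} \subset \Xi$, which is precisely the inclusion (\ref{sp}).
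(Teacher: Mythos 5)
Your overall architecture for part (i) --- reduce everything to Theorem \ref{the tec} applied to ${\cal M}_0=AAP(\R^+,\X)$ and verify the ergodic condition through the identity $R(\lambda,\bar S)\bar u_\cdot=R(\lambda,\tilde{\cal P})\bar u_\cdot$ --- is not the paper's route, and the step you yourself flag as the main obstacle is a genuine gap, not a routine one. The hypothesis of the theorem is only \emph{strong} convergence: $(\lambda-\xi_0)R(\lambda,P)x\to 0$ for each fixed $x$. Even granting a uniform bound $\sup_{\lambda}\|(\lambda-\xi_0)R(\lambda,P(t))\|<\infty$, strong convergence together with a uniform operator bound yields uniform convergence only on relatively compact sets, whereas the orbit $\{u_t:t\ge 0\}\subset C_r$ of a general bounded mild solution is merely bounded, not relatively compact. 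Hence $\sup_{t\ge 0}\|(\lambda-\xi_0)R(\lambda,P(t))u_t\|\to 0$ (or its quotient-norm analogue in $\tilde\Y$) does not follow, and no ``equicontinuity modulo $\tilde{\cal M}_0$'' argument repairs this without a compactness hypothesis the theorem does not make. Your appeal to Lemma \ref{lem 2} is also misplaced: that lemma concerns isolated singularities of $R(\lambda,\bar S)\bar x$ for the invertible isometry $\bar S$ on the quotient space, and gives no information about $\lambda\mapsto R(\lambda,P)$, whose singularity at $\xi_0$ need not even be isolated since $\sigma_\Gamma(P)$ is only assumed countable.

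The paper circumvents exactly this difficulty by going discrete: it invokes \cite[Theorem 2.10]{min} to conclude that the sequence $\{u_n\}$ is asymptotically almost periodic (it is the individual-orbit structure $u_{n+1}=Pu_n+\cdots$ that allows the single-vector ergodic condition to be propagated along the orbit there), and then transfers from this discrete skeleton to the continuous solution $u(\cdot)$ by the argument of \cite[Proposition 2.11]{bathutrab}. For part (ii), your bootstrap with the tailored subspace ${\cal M}'$ is correct in outline but both unnecessary and dependent on the same unproved ergodic verification: once (i) is known, the inclusion (\ref{sp}) follows directly from the estimate (\ref{est 3}) of Lemma \ref{lem 4} with ${\cal M}=C_0(\X)$ combined with Proposition \ref{pro} and (\ref{bohr}), which is what the paper does.
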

\begin{proof}
By \cite[Theorem 2.10]{min} the sequence $\{ P^nu_0\} _{n=1}^{\infty}=\{u_n\}_{n=1}^{\infty}$ is asymptotically almost periodic. Applying the same lines of the proof of \cite[Proposition 2.11]{bathutrab} we can deduce that the mild solution $u(\cdot )$ itself is asymptotically almost periodic.

\medskip
Since in this case the space ${\cal M}=C_0(\X)$, 
\begin{equation*}
\sigma (u) =\sigma (u_{AP}), \ \ \sigma (f)=\sigma (f_{AP}).
\end{equation*}

\medskip
By Proposition \ref{pro} and by (\ref{bohr})
\begin{equation*}
\sigma (u_{AP}) =\overline{ e^{\overline{\sigma_b(u_{AP})}}}, \ \  \sigma (f_{AP}) =\overline{ e^{\overline{\sigma_b(f_{AP})}}} .
\end{equation*}
Now (\ref{sp}) follows from the estimate (\ref{est 3}) of Lemma \ref{lem 4}. This completes the proof of the theorem.
\end{proof}
The following is an immediate corollary of the above theorem:
\begin{corollary}
Assume that
\begin{enumerate}
\item 
$\sigma _\Gamma (P) \subset \{ 1\}$, $f$ is asymptotic 1-periodic function,
\item
\begin{eqnarray}
\lim_{\lambda \downarrow 1} (\lambda -1 )R(\lambda ,
P) x =0 .
\end{eqnarray}
\end{enumerate}
Then, every bounded uniformly continuous mild solution of (\ref{FDE}) is asymptotic 1-periodic.
\end{corollary}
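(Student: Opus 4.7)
The plan is a straightforward specialization of Theorem \ref{the 2} to the 1-periodic case. First, I would verify the hypotheses of Theorem \ref{the 2}. An asymptotically 1-periodic function is by definition of the form $f = f_p + f_0$ with $f_p$ continuous and 1-periodic and $f_0 \in C_0(\X)$; since every continuous 1-periodic function is Bohr almost periodic, we have $f_p \in AP(\R^+,\X)$, so $f \in AAP(\R^+,\X)$ with $f_{AP} = f_p$. The countability hypothesis on $\sigma_\Gamma(P)$ is trivially satisfied by $\sigma_\Gamma(P) \subset \{1\}$, and the limit condition at the only possible point $\xi_0 = 1$ is exactly hypothesis (ii) of the corollary.

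Applying Theorem \ref{the 2}, I obtain a decomposition $u = u_{AP} + u_0$ with $u_{AP} \in AP(\R^+,\X)$ and $u_0 \in C_0(\X)$, together with the spectral inclusion (\ref{sp}). It then remains to show that $u_{AP}$ itself is 1-periodic. Because $f_p$ is continuous and 1-periodic, its Fourier/Bohr exponents lie in $2\pi\Z$, hence $e^{i \sigma_b(f_{AP})} \subset \{1\}$. Combined with $\sigma_\Gamma(P) \subset \{1\}$, the right-hand side of (\ref{sp}) collapses to $\{1\}$, which forces $\sigma_b(u_{AP}) \subset 2\pi\Z$.

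Finally, by the approximation theorem recalled in Section 2, $u_{AP}$ is a uniform limit on $\R$ of trigonometric polynomials whose exponents are drawn from $\sigma_b(u_{AP}) \subset 2\pi\Z$. Each such polynomial is 1-periodic, and a uniform limit of 1-periodic continuous functions is 1-periodic, so $u_{AP}$ is 1-periodic and $u = u_{AP} + u_0$ is asymptotic 1-periodic.

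The corollary is essentially pure bookkeeping on spectra, so there is no genuine obstacle. The only point requiring a moment's care is translating the multiplicative spectral inclusion (\ref{sp}) on $\Gamma$ back to the additive Bohr spectrum on $\R$ via the exponential map, which is handled uniformly by the identity $\{\lambda \in \R : e^{i\lambda} = 1\} = 2\pi\Z$.
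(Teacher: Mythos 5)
Your proof is correct and follows exactly the route the paper intends: the paper states this result as an ``immediate corollary'' of Theorem \ref{the 2} without further argument, and your writeup is precisely that specialization, including the only step that needs any care --- translating the inclusion (\ref{sp}) with right-hand side collapsed to $\{1\}$ into $\sigma_b(u_{AP})\subset 2\pi\Z$ and then invoking the approximation theorem to get $1$-periodicity of $u_{AP}$.
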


\bibliographystyle{amsplain}

\begin{thebibliography}{10}

\bibitem{bathutrab}C. J. K. Batty, W. Hutter, F. R\"abiger,
Almost periodicity of mild solutions of inhomogeneous periodic Cauchy problems,
{\it J. Differential Equations} {\bf 156} (1999), 309-327.

\bibitem{che}
D. Cheban, "Asymptotically almost periodic solutions of differential equations". Hindawi Publishing Corporation, New York, 2009.


\bibitem{engnag}
K-J. Engel, R. Nagel, "One-parameter semigroups for linear evolution equations". With contributions by S. Brendle, M. Campiti, T. Hahn, G. Metafune, G. Nickel, D. Pallara, C. Perazzoli, A. Rhandi, S. Romanelli and R. Schnaubelt. Graduate Texts in Mathematics, {\bf 194}. Springer-Verlag, New York, 2000.


\bibitem{fin} A.M. Fink, "Almost Periodic Differential Equations", Lecture Notes in Math.,
{\bf 377}, Springer, Berlin-New York, 1974.

\bibitem{furnaimin} T. Furumochi, T. Naito, Nguyen Van Minh,
Boundedness and almost periodicity of solutions of partial functional differential equations, {\it J. Differential Equations}, {\bf 180} (2002), no. 1, 125-152.

\bibitem{hatkri} L. Hatvani, T. Kristin, On the existence of periodic solutions for linear inhomogeneous and quasilinear functional differential equations, {\it J. Differential Equations} {\bf 97}(1992), 1-15.

\bibitem{hen} D. Henry, { "Geometric Theory of Semilinear Parabolic Equations"}, Lecture Notes
in Math., Springer-Verlag, Berlin-New York, 1981.

\bibitem{hinnaiminshi}
Y. Hino, T. Naito, N.V. Minh, J.S. Shin,
"Almost Periodic Solutions of Differential Equations in Banach Spaces". Taylor \& Francis. London \& New York 2001.


\bibitem{levzhi} B. M. Levitan, V. V.  Zhikov, 
{"Almost Periodic Functions and
Differential Equations"}, Moscow Univ. Publ. House 1978. English
translation by Cambridge University Press 1982.


\bibitem{min}
Nguyen Van Minh, Asymptotic behavior of individual orbits of discrete systems, {\it Proceedings of the A.M.S.} {\bf 137} (2009), no. 9, 3025-3035.



\bibitem{murnaimin2}
S. Murakami, T. Naito, Nguyen Van Minh,
Massera's theorem for almost periodic solutions of functional differential equations, {\it Journal of the Math. Soc. of Japan}, {\bf 47} (2004), no. 1, 247-268.




\bibitem{mingasste} Nguyen Van Minh, G. N'Guerekata, S. Siegmund,
Circular spectrum and bounded solutions of periodic evolution equations,
{\it J. Differential Equations}
{\bf 246}, (2009), no 8, 3089-3108.

\bibitem{ruevu}
W. M. Ruess, Q. P. Vu, Asymptotically almost periodic solutions of evolution equations in Banach spaces,
{\it J. Differential Equations} {\bf 122} (1995), no. 2, 282--301.

\bibitem{luomin}
Vu Trong Luong, Nguyen Van Minh, Almost periodic solutions of periodic linear partial functional differential equations. {\it Funkcialaj Ekvacioj}. To appear.
\end{thebibliography}

\end{document}